\documentclass[a4paper]{amsart}
\usepackage{amsmath,amsthm,amssymb}
\usepackage[english]{babel}
\usepackage{enumerate}
\usepackage{microtype}
\usepackage{stmaryrd}
\usepackage{dsfont}

\numberwithin{equation}{section}

 
\newtheorem{thm}{Theorem}[section]
\newtheorem{cor}[thm]{Corollary}
\newtheorem{lem}[thm]{Lemma}
\newtheorem{prop}[thm]{Proposition}

\theoremstyle{definition}
\newtheorem{defi}[thm]{Definition}

\renewcommand{\phi}{\varphi}

\renewcommand{\leq}{\leqslant}
\renewcommand{\geq}{\geqslant}

\newcommand{\Real}{\mathds{R}}
\newcommand{\Rat}{\mathds{Q}}

\newcommand{\bigset}[1]{\big\{ #1 \big\}}

\begin{document}

\title{Effective Genericity and Differentiability}

\author[R. Kuyper]{Rutger Kuyper}
\address[Rutger Kuyper]{Radboud University Nijmegen\\
Department of Mathematics\\
P.O. Box 9010, 6500 GL Nijmegen, the Netherlands.}
\email{r.kuyper@math.ru.nl}
\thanks{The research of the first author was supported by NWO/DIAMANT grant 613.009.011 and by 
John Templeton Foundation grant 15619: `Mind, Mechanism and Mathematics: Turing Centenary Research Project'.}
\author[S. A. Terwijn]{Sebastiaan A. Terwijn}
\address[Sebastiaan A. Terwijn]{Radboud University Nijmegen\\
Department of Mathematics\\
P.O. Box 9010, 6500 GL Nijmegen, the Netherlands.} 
\email{terwijn@math.ru.nl}

\date{\today}

\begin{abstract}
\noindent 
We prove that a real $x$ is 1-generic if and only if every differentiable computable function 
has continuous derivative at $x$. This provides a counterpart to recent results connecting 
effective notions of randomness with differentiability. 
We also consider multiply differentiable computable functions and polynomial time computable
functions.
\end{abstract}

\maketitle

\section{Introduction}

The notion of 1-genericity is an effective notion of genericity from 
computability theory that has been studied extensively, see e.g.\ 
Jockusch~\cite{Jockusch}, or the textbooks Odifreddi~\cite{OdifreddiII} and 
Downey and Hirschfeldt~\cite{downey-hirschfeldt-2010}.
1-Genericity, or $\Sigma^0_1$-genericity in full, 
can be defined using computably enumerable (c.e.) sets of strings as forcing 
conditions. 
This notion captures a certain type of effective finite extension constructions
that is common in computability theory. 
In this paper we give an characterization of 1-genericity in terms of 
familiar notions from computable analysis. 
This complements recent results by 
Brattka, Miller, and Nies~\cite{BrattkaMillerNies} that characterize 
various notions of algorithmic randomness in terms of computable analysis. 
For example, in \cite{BrattkaMillerNies} it was proven (building on earlier 
work by Demuth~\cite{Demuth}) that an element 
$x\in[0,1]$ is Martin-L\"of random if and only if 
every computable function of bounded variation is differentiable at~$x$.
Note that the notion of Martin-L\"of randomness, which one could also call 
$\Sigma^0_1$-randomness, is the measure-theoretic counterpart of the topological 
notion of 1-genericity. 

The main result of this paper is as follows. 

\begin{thm}\label{thm-main}
A real $x \in [0,1]$ is 1-generic if and only if every 
differentiable computable function $f: [0,1] \to \Real$ has continuous derivative at $x$.
\end{thm}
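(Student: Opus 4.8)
The plan is to prove the two implications separately. Since no $1$-generic real has an eventually constant binary expansion, every $1$-generic real is irrational; and a dyadic rational $x$ is not $1$-generic, for which the theorem holds because $t\mapsto(t-x)^2\sin\big(1/(t-x)\big)$ (with value $0$ at $x$) is a differentiable computable function whose derivative is discontinuous at $x$. So assume $x$ irrational; then $x$ lies in the open dyadic interval $\mathring{I}_\sigma$ exactly when $\sigma\prec x$, and $x$ is interior to $I_{x\restriction n}$ for every $n$. Write $\ell_\sigma=2^{-|\sigma|}$ for the length of $I_\sigma=[a_\sigma,b_\sigma]$ and $s_\sigma=(f(b_\sigma)-f(a_\sigma))/(b_\sigma-a_\sigma)$ for the slope of the chord of $f$ over $I_\sigma$; if $f$ is computable, $s_\sigma$ is a real computable uniformly in $\sigma$. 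The recurring fact: if $f$ is differentiable at a point $p$ interior to $I_\tau$, then $s_\tau$ is a convex combination of $(f(p)-f(a_\tau))/(p-a_\tau)$ and $(f(b_\tau)-f(p))/(b_\tau-p)$, so $s_\tau\to f'(p)$ as $I_\tau$ shrinks to $p$; in particular $s_{x\restriction n}\to f'(x)$, and similarly $s_\tau\to f'(t)$ along dyadic intervals shrinking to any point $t$ of differentiability of $f$ (using a one-sided chord when $t$ is dyadic).

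For ``$x$ $1$-generic $\Rightarrow$ $f'$ continuous at $x$'' I argue by contraposition: assume $f$ is differentiable and computable but $f'$ is discontinuous at $x$. Then there are $t_j\to x$ with $t_j\neq x$ and $f'(t_j)\not\to f'(x)$; passing to a subsequence we may assume $f'(t_j)\to\ell$ in $[-\infty,\infty]$ with $\ell\neq f'(x)$, and (replacing $f$ by $-f$ if needed) $\ell>f'(x)$. Fix a rational $r$ with $f'(x)<r<\ell$ and, for each $m$, the c.e.\ set $W_m=\{\tau:|\tau|\geq m\text{ and }s_\tau>r\}$. I claim that for all sufficiently large $m$, $x$ neither meets nor avoids $W_m$, so $x$ is not $1$-generic. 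Since $s_{x\restriction n}\to f'(x)<r$, for large $m$ there is no $\tau\prec x$ with $|\tau|\geq m$ and $s_\tau>r$, so $x$ does not meet $W_m$. And $x$ does not avoid $W_m$, for any $m$: since $f$ is differentiable at each $t_j$ and $f'(t_j)>r$ eventually, the recurring fact produces dyadic intervals $I_\tau$ of arbitrarily small length arbitrarily close to $t_j$ with $s_\tau>r$, hence with $\tau\in W_m$ once the length is at most $2^{-m}$; since $t_j\to x$, for each $n$ we may pick $j$ and then such an $I_\tau$ small enough to lie inside $I_{x\restriction n}$, yielding an extension of $x\restriction n$ in $W_m$.

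For ``$x$ not $1$-generic $\Rightarrow$ some differentiable computable $f$ has $f'$ discontinuous at $x$'': fix a c.e.\ $W$ that $x$ neither meets nor avoids; passing to its $\prec$-minimal elements and then padding (replace each $\sigma$ that enters at stage $s$ by all its extensions of length $\max(|\sigma|,s,2)$) we may take $W$ an antichain, still neither met nor avoided by $x$, with each $\sigma\in W$ entering at stage $s$ satisfying $|\sigma|\geq\max(s,2)$; then the $I_\sigma$ ($\sigma\in W$) are pairwise disjoint, avoid $x$, and accumulate at $x$. Fix a nonconstant smooth bump $\phi$ supported on $[-1,1]$ with $\phi(0)=1$, and put $f=\sum_{\sigma\in W}g_\sigma$ with $g_\sigma(t)=\ell_\sigma^2\,\phi\big((t-c_\sigma)/\ell_\sigma^2\big)$, $c_\sigma$ the midpoint of $I_\sigma$. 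The $g_\sigma$ have pairwise disjoint supports lying well inside the $I_\sigma$, so $f$ is well defined; $g_\sigma'(t)=\phi'\big((t-c_\sigma)/\ell_\sigma^2\big)$ bounds $|f'|$ by $\|\phi'\|_\infty$, so $f$ is Lipschitz, and $\|g_\sigma\|_\infty\leq\ell_\sigma^2\leq 2^{-2s}$ for $\sigma$ entering at stage $s$ makes $f$ computable. On each $\mathring{I}_\sigma$ and on the interior of the complement of $\bigcup_\sigma I_\sigma$, $f$ is smooth, and at a boundary point $y$ of $\bigcup_\sigma\mathring{I}_\sigma$ one checks $f'(y)=0$: for $t$ near $y$, either $f(t)=0$ or $t$ lies in the support of some $g_\sigma$ with $\ell_\sigma$ small, in which case $|f(t)|\leq\ell_\sigma^2$ while $|t-y|\geq\ell_\sigma/4$. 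Hence $f$ is differentiable everywhere. Finally $x$ is such a boundary point: $x\notin\bigcup_\sigma\mathring{I}_\sigma$ because $x$ does not meet $W$, and $x\in\overline{\bigcup_\sigma\mathring{I}_\sigma}$ because $x$ does not avoid $W$. So $f'(x)=0$, while every neighbourhood of $x$ contains some $I_\sigma$ with $\sigma\in W$, hence a point where $|f'|=\|\phi'\|_\infty>0$; thus $f'$ is discontinuous at $x$.

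The step I expect to be the main obstacle is the ``does not avoid $W_m$'' half of the first implication, and with it the realization that the discontinuity of $f'$ at $x$ must not be detected by comparing $s_\tau$ with the slopes of proper initial segments of $\tau$ — which can be far from $f'(x)$ and would wrongly force initial segments of $x$ into the set — but by comparing $s_\tau$ with a fixed rational $r$ strictly between $f'(x)$ and the bad limit $\ell$; the length cutoff $|\tau|\geq m$ is precisely what discards the finitely many short strings on which $s_\tau$ might happen to exceed $r$. In the second implication the delicate points are making $f$ genuinely differentiable — not merely continuous — at $x$ and at every boundary point of $\bigcup_\sigma\mathring{I}_\sigma$, handled by taking the bumps flat and narrow with height and width $\ell_\sigma^2$, and making $f$ computable from a merely c.e.\ $W$, handled by the stage-padding.
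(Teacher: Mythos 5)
Your proposal is correct in substance but takes a genuinely different route from the paper on both halves. For the forward direction the paper shows that the derivative of a differentiable computable function is of effective Baire class~1 and then effectivizes Baire's theorem on the points of continuity of Baire class~1 functions; your chord-slope argument with the c.e.\ sets $W_m=\{\tau: |\tau|\geq m,\ s_\tau>r\}$ reaches the same conclusion directly and elementarily, at the price of losing the more general statement (every effective Baire class~1 function is continuous at every 1-generic point) that the paper reuses for $n$-th derivatives. Your key points are sound: the convex-combination (``straddle'') fact is applied correctly, including the one-sided case at dyadic $t_j$; the fixed rational $r$ strictly between $f'(x)$ and the bad limit is the right comparison; and the length cutoff $m$ correctly discards the finitely many initial segments of $x$ on which $s_\tau$ might exceed $r$. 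For the reverse direction the paper works from a $\Pi^0_1$ class $V$ and places scaled copies of $x^2\sin(1/x^2)$ on the complementary intervals, whereas you place uniformly Lipschitz bumps of height and width $\ell_\sigma^2$ at the midpoints of the intervals $I_\sigma$ for $\sigma$ in a c.e.\ antichain; this makes the differentiability check at boundary points cleaner (the difference quotient is bounded by $4\ell_\sigma\leq 16|t-y|$), and your stage-padding $|\sigma|\geq s$ plays the same role as the paper's $2^{-n}$ damping in securing computability.

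Two small repairs are needed. First, the set of $\prec$-minimal elements of a c.e.\ set is in general only co-c.e., so ``pass to minimal elements, then pad'' is not literally an effective operation. Instead, build the antichain in one enumeration: when $\sigma$ enters $W$ at stage $s$, add all extensions of $\sigma$ of length $\max(|\sigma|,s,2)$ that do not already have a prefix in the set constructed so far. This yields a c.e.\ antichain $A$ with $[A]=[W]$ and the required length-versus-stage bound, and $[A]=[W]$ guarantees that $A$ is still dense along $x$ and still not met by $x$. Second, your case split ``dyadic rational versus irrational'' omits the non-dyadic rationals; these are not 1-generic, have unique binary expansions, and are interior to all their dyadic intervals, so your main construction applies to them verbatim (or, since they are computable, your explicit function $t\mapsto (t-x)^2\sin(1/(t-x))$ already settles them). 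You are also tacitly identifying 1-genericity of $x\in[0,1]$ with 1-genericity of its binary expansion in $2^\omega$; this is exactly Proposition~\ref{prop-gen-int} of the paper and should be cited or reproved rather than assumed.
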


The two implications of this theorem will be proven in Theorems \ref{thm-main1} and \ref{thm-main2}.
Note that by ``differentiable computable function'' we mean a computable function 
that is classically differentiable, so that in particular the derivative need 
not be continuous. 
Our result can be seen an effectivization of a result by Bruckner and Leonard.

\begin{thm}{\rm (Bruckner and Leonard \cite[p.\ 27]{bruckner-leonard-1966})}
A set $A \subseteq \Real$ is the set of discontinuities of a derivative 
if and only if $A$ is a meager $\mathbf{\Sigma}^0_2$ set.
\end{thm}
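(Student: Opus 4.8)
The plan is to prove the two implications separately; the forward one is soft, and essentially all of the work lies in the construction for the converse. \emph{Forward implication.} Suppose $A$ is the set of discontinuities of a derivative $f'$, where $f\colon\Real\to\Real$ is differentiable. Then $f$ is continuous, so each difference quotient $x\mapsto n\bigl(f(x+1/n)-f(x)\bigr)$ is continuous and $f'$ is their pointwise limit; hence $f'$ is of Baire class $1$. For an arbitrary real function $g$ the set of its discontinuities is $\bigcup_n\{x:\operatorname{osc}_g(x)\geq 1/n\}$, a countable union of closed sets, so $A$ is $\mathbf{\Sigma}^0_2$ (that is, $F_\sigma$); and by Baire's theorem the set of continuity points of a Baire class $1$ function is comeager, so $A$ is meager. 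This settles one direction, using in addition the elementary fact that a meager $F_\sigma$ set is precisely a countable increasing union of closed nowhere dense sets (a closed subset of a meager set is nowhere dense).

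\emph{Converse.} Let $A$ be meager $\mathbf{\Sigma}^0_2$; it suffices to treat $A\subseteq[0,1]$, the general case following by a routine patching over the intervals $[k,k+1]$. Write $A=\bigcup_n F_n$ with $F_1\subseteq F_2\subseteq\cdots$ closed and nowhere dense. The idea is to build $f=\sum_n f_n$, where each $f_n\colon[0,1]\to\Real$ is differentiable with $f_n'\equiv 0$ on $F_n$, with $f_n'$ continuous off $F_n$, with $f_n'$ oscillating by at least some $\omega_n>0$ at every point of $F_n$, and with $\|f_n\|_\infty$ and $\|f_n'\|_\infty$ decaying rapidly in $n$; one then shows that $f'=\sum_n f_n'$ has discontinuity set exactly $A$.

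To produce $f_n$, write $[0,1]\setminus F_n$ as a union of intervals $(a,b)$ and, on each such interval, place a smooth bump that near the left endpoint behaves like $c_n(t-a)^2\sin\bigl(b_n/(t-a)\bigr)$ and near the right endpoint like the mirror image -- for instance $f_n(t)=c_n(b-a)^{-2}(t-a)^2(b-t)^2\sin\bigl(b_n(b-a)/((t-a)(b-t))\bigr)$ on $(a,b)$ -- with $b_n$ enormous and $c_n$ correspondingly tiny, so that $|f_n(t)|\leq\min((t-a)^2,(b-t)^2)$. The quadratic vanishing at the endpoints makes $f_n$ differentiable with derivative $0$ at each $x\in F_n$: as $y\to x$ along a complementary interval $(a,b)$, the endpoint of $(a,b)$ lying between $y$ and $x$ tends to $x$, so $|f_n(y)|/|y-x|$ is at most the distance from $y$ to that endpoint, which tends to $0$. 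On the other hand, differentiating the $\sin$-factor produces a term of the shape $-c_nb_n\cos(\cdots)$, so $f_n'$ oscillates by roughly $2c_nb_n$ arbitrarily close to every endpoint, hence -- $F_n$ being nowhere dense, so that every point of $F_n$ lies arbitrarily near such an endpoint -- by some $\omega_n$ of order $c_nb_n$ at every point of $F_n$; a direct estimate of the explicit formula gives $\|f_n'\|_\infty\leq 2c_nb_n$ and $\|f_n\|_\infty\leq c_n$.

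Now fix the parameters so that $c_nb_n=8^{-n}/2$, hence $\|f_n'\|_\infty\leq 8^{-n}$, while (by enlarging $b_n$) $c_n$ can be driven down and $\omega_n$ kept of order $8^{-n}$. Then $\sum_n f_n'$ converges uniformly, so $f:=\sum_n f_n$ is differentiable with $f'=\sum_n f_n'$. If $x\notin A$ then every $f_n'$ is continuous at $x$, and by uniform convergence so is $f'$. If $x\in F_N\setminus F_{N-1}$, then $f_n'$ is continuous at $x$ for $n<N$, the oscillation of $f_N'$ at $x$ is at least $\omega_N$, and the oscillation contributed by the uniformly convergent tail $\sum_{n>N}f_n'$ is at most $\sum_{n>N}2\|f_n'\|_\infty\leq\tfrac{2}{7}8^{-N}<\omega_N$; hence, by subadditivity of oscillation, $f'$ is discontinuous at $x$. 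Thus $f'$ has discontinuity set exactly $A$. The main obstacle -- and the technical heart of the argument -- is precisely this last balancing: a point may lie in infinitely many $F_n$, so near it $f'$ is a uniformly convergent series of functions each of which may be discontinuous there, and the bumps must be engineered to deliver \emph{simultaneously} a usable lower bound on the oscillation they create, a summable upper bound on $\|f_n'\|_\infty$, and quadratic vanishing at the endpoints, so that the one genuine discontinuity is never cancelled by the tail.
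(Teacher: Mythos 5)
The paper offers no proof of this statement---it is quoted from Bruckner and Leonard---so there is nothing internal to compare against line by line; judged on its own terms, your argument is correct, and it is in fact the classical, non-effective ancestor of the machinery the paper does develop. Your forward direction is the standard one (difference quotients exhibit $f'$ as a pointwise limit of continuous functions, the discontinuity set of any function is $F_\sigma$ via oscillation, and Baire's theorem gives meagerness), which is exactly the non-effective content of Corollary~\ref{dif-baire} and Theorem~\ref{baire-eff}. For the converse, the paper's Theorem~\ref{func-cons} performs a single layer of your construction---rescaled $x^2\sin(1/x^2)$-type bumps of height $2^{-n}$ on the complementary intervals of one closed set---because one $\Pi^0_1$ class suffices for its characterization; you must additionally sum over the increasing family $F_1\subseteq F_2\subseteq\cdots$, and your bookkeeping is precisely what that requires: quadratic vanishing gives $f_n'\equiv 0$ on $F_n$, every point of a nowhere dense closed set is a limit of endpoints of complementary intervals so the oscillation of $f_n'$ there is at least $2c_nb_n$, and the balance $\operatorname{osc}_{f'}(x)\geq \omega_N-2\sum_{n>N}\|f_n'\|_\infty\geq 8^{-N}-\tfrac{2}{7}8^{-N}>0$ correctly handles points lying in infinitely many $F_n$ (the estimate $\|f_n'\|_\infty\leq 2c_nb_n$ also checks out, since the term coming from the polynomial prefactor is $O(c_n)$ and the $\cos$-term is bounded by $c_nb_n$). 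One small repair is needed: your bump is symmetric and oscillates near \emph{both} endpoints of each complementary interval, so on a component having an endpoint that does not belong to $F_n$ (an unbounded component of $\Real\setminus F_n$, or the points $0$ and $1$ after your reduction to $[0,1]$) you would manufacture a spurious discontinuity of $f'$ at a point outside $A$, and the formula does not even parse when an endpoint is infinite. The fix is routine---use a one-sided bump that oscillates only near those endpoints lying in $F_n$ and is smoothly cut off elsewhere---but it should be said.
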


One might expect that, in analogy to Theorem~\ref{thm-main}, 
$n$ times differentiable computable functions would characterize $n$-genericity. 
However, in section \ref{sec-mult-dif} we show that $1$-genericity is also equivalent
to the $n$th derivative of any $n$ times differentiable computable function being continuous at $x$.
In section \ref{sec-poly} we consider differentiable polynomial time computable functions and show that
again these characterize 1-genericity.

Our notation is mostly standard. We denote the natural numbers by~$\omega$. 
The Cantor space of all infinite binary sequences is denoted by $2^\omega$, 
and $2^{<\omega}$ is the set of all finite binary strings. 
For a finite string $\sigma$ and a finite or infinite string $x$, 
we denote by $\sigma\sqsubseteq x$ that $\sigma$ is an initial segment of $x$. 
For a string $\sigma\in 2^{<\omega}$, we have 
\[[\sigma] = \bigset{x \in 2^\omega : \sigma\sqsubseteq x}.\]
The product topology on $2^\omega$, sometimes called the tree topology, 
or the finite information topology, has all sets of the form $[\sigma]$ as 
basic open sets. 
For a set $A \subseteq 2^{<\omega}$, we let 
\[[A] = \bigcup_{\sigma\in A} [\sigma].\]
Thus every set $A$ of finite strings defines an open subset of $2^\omega$. 
A subset of $2^\omega$ is a \emph{$\Sigma^0_1$ class}, or \emph{effectively open}, 
if it is of the form $[A]$, with $A\subseteq 2^{<\omega}$ computably enumerable (c.e.). 
A set is a \emph{$\Pi^0_1$ class}, or \emph{effectively closed}, if it is the 
complement of a $\Sigma^0_1$ class. 
Thus $\Sigma^0_1$ and $\Pi^0_1$ classes form the first levels of the 
effective Borel hierarchy. 
As usual, the levels of the classical Borel hierarchy are denoted by 
boldface symbols $\mathbf{\Sigma}^0_n$ and $\mathbf{\Pi}^0_n$.
These notions are defined in the same way for $[0,1]$, using 
rational intervals as basic opens.
We denote the interior of a set $V\subseteq 2^\omega$ by $\mathrm{Int}(V)$. 

For unexplained notions from computability theory, we refer to 
Odifreddi~\cite{OdifreddiI} or Downey and Hirschfeldt~\cite{downey-hirschfeldt-2010}.
For background in descriptive set theory we refer to Kechris~\cite{kechris-1995} or 
Moschovakis~\cite{moschovakis-2009}. 
Further background on (classical) Baire category theory can also be found in 
Oxtoby~\cite{oxtoby-1980}.

\section{1-Genericity}

First, let us recall what it means for an element $x \in 2^\omega$ to be 1-generic.
We will then discuss 1-genericity for elements of $[0,1]$. 
A discussion of the properties of arithmetically generic and 1-generic sets
can be found in Jockusch~\cite{Jockusch}. 
The ``forcing-free'' formulation of genericity we use here is due to 
Posner, see \cite[p115]{Jockusch}.

Given a sequence $x\in 2^\omega$ and a set $A \subseteq 2^{<\omega}$, 
we say that $x$ \emph{meets\/} $A$ if there exists $\sigma\sqsubseteq x$ 
such that $\sigma\in A$;
equivalently, if $x\in [A]$. 
The set $A$ is \emph{dense along\/} $x$ if for every $\sigma\sqsubseteq x$
there is an extension $\tau\sqsupseteq \sigma$ such that $[\tau]\subseteq [A]$;
equivalently, if $x$ is in the closure of the open set $[A]$.

\begin{defi} \label{def:1-generic}
An element $x \in 2^\omega$ is \emph{$1$-generic} if $x$ meets every 
c.e.\ set $A \subseteq 2^{<\omega}$ that is dense along $x$. 
\end{defi}

We now reformulate the definition of 1-genericity into a form 
that will be convenient in what follows.
This formulation is also better suited for the discussion of 
generic real numbers (as opposed to infinite strings). 

\begin{lem}\label{lem-gen-int}
Let $A \subseteq 2^{<\omega}$ and let $V = 2^\omega \setminus [A]$. 
Then $A$ is dense along $x$ if and only if $x$ is not in the interior of $V$. 
Therefore, $A$ is dense along $x$ and $x$ does not meet $A$ if and only if 
$x \in V \setminus \mathrm{Int}(V)$.
\end{lem}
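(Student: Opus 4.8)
The plan is to reduce the first equivalence to the elementary point-set identity $\mathrm{Int}(2^\omega \setminus X) = 2^\omega \setminus \overline{X}$. As already observed right after the definition of ``dense along'', the set $A$ is dense along $x$ precisely when $x$ belongs to the closure $\overline{[A]}$ of the open set $[A]$. Applying the identity with $X = [A]$ gives $\mathrm{Int}(V) = \mathrm{Int}(2^\omega \setminus [A]) = 2^\omega \setminus \overline{[A]}$, so $x \notin \mathrm{Int}(V)$ if and only if $x \in \overline{[A]}$. Chaining the two equivalences yields exactly: $A$ is dense along $x$ iff $x \notin \mathrm{Int}(V)$.

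If one prefers to argue directly from the basic open sets rather than quoting the closure reformulation, one can proceed as follows. For the forward implication, suppose $A$ is dense along $x$ but $x \in \mathrm{Int}(V)$; then there is $\sigma \sqsubseteq x$ with $[\sigma] \subseteq V$, i.e.\ $[\sigma] \cap [A] = \emptyset$, contradicting density of $A$ along $x$, which provides some $\tau \sqsupseteq \sigma$ with $\emptyset \neq [\tau] \subseteq [A] \cap [\sigma]$. For the converse, suppose $x \notin \mathrm{Int}(V)$ and fix any $\sigma \sqsubseteq x$. Then $[\sigma]$ is not contained in $V$, so it meets $[A]$: there is $\tau' \in A$ with $[\sigma] \cap [\tau'] \neq \emptyset$. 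Hence $\sigma$ and $\tau'$ are comparable; letting $\tau$ be the longer of the two, we get $\tau \sqsupseteq \sigma$ and $[\tau] \subseteq [\tau'] \subseteq [A]$, so $A$ is dense along $x$.

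The ``therefore'' clause is then immediate: $x$ does not meet $A$ means $x \notin [A]$, i.e.\ $x \in V$, so ``$A$ is dense along $x$ and $x$ does not meet $A$'' translates into ``$x \in V$ and $x \notin \mathrm{Int}(V)$'', which is exactly $x \in V \setminus \mathrm{Int}(V)$. I do not expect any genuine obstacle here: the lemma is essentially a bookkeeping statement unwinding the definitions together with a standard topological identity. The only step deserving a line of care is the converse of the first equivalence, where one must turn the soft fact ``$[\sigma]$ meets the open set $[A]$'' into an honest witnessing string $\tau \sqsupseteq \sigma$ with $[\tau] \subseteq [A]$; this works because in $2^{<\omega}$ any two strings whose cylinders intersect are comparable.
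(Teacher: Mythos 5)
Your proposal is correct and follows essentially the same route as the paper: both reduce the first equivalence to the fact that $A$ is dense along $x$ iff $x\in\overline{[A]}$, and then use that $\mathrm{Int}(V)$ is the complement of $\overline{[A]}$ (the paper phrases this as $\mathrm{Int}(V)$ being the largest open set disjoint from $[A]$). Your additional hands-on argument via comparable cylinders is a nice extra check of the closure reformulation but not a different method.
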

\begin{proof}
Note that $A$ is dense along $x$ if and only if every open set containing $x$ has 
nonempty intersection with $[A]$. Thus, $A$ is dense along $x$ if and only if every 
open set disjoint from $[A]$ does not contain $x$. However, the open sets disjoint 
from $[A]$ are exactly the open sets contained in $V$, of which 
$\mathrm{Int}(V)$ is the largest. Thus $A$ is dense along $x$ if and only if 
$\mathrm{Int}(V)$ does not contain $x$. 
\end{proof}

\begin{cor}\label{cor-gen-int}
For any $x \in 2^\omega$ we have that $x$ is 1-generic if and only if 
for every $\Pi^0_1$ class $V \subseteq 2^\omega$ we have $x \not\in V \setminus \mathrm{Int}(V)$.
\end{cor}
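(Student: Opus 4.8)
The plan is to deduce this immediately from Definition~\ref{def:1-generic} and Lemma~\ref{lem-gen-int}, the only real content being the bookkeeping that translates ``c.e.\ set of strings'' into ``$\Pi^0_1$ class''. First I would pass to the contrapositive on both sides: $x$ fails to be 1-generic precisely when there is a c.e.\ set $A\subseteq 2^{<\omega}$ that is dense along $x$ and which $x$ does not meet, and the right-hand side fails precisely when there is a $\Pi^0_1$ class $V$ with $x\in V\setminus\mathrm{Int}(V)$. So it suffices to show these two existence statements are equivalent.

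For the forward direction, suppose such a c.e.\ $A$ is given and put $V = 2^\omega\setminus[A]$. Since $A$ is c.e., $[A]$ is a $\Sigma^0_1$ class, hence $V$ is a $\Pi^0_1$ class; and by the second assertion of Lemma~\ref{lem-gen-int}, the hypotheses that $A$ is dense along $x$ and that $x$ does not meet $A$ are together equivalent to $x\in V\setminus\mathrm{Int}(V)$. Conversely, given a $\Pi^0_1$ class $V$ with $x\in V\setminus\mathrm{Int}(V)$, by definition of $\Pi^0_1$ class we may write $V = 2^\omega\setminus[A]$ for some c.e.\ set $A\subseteq 2^{<\omega}$, and Lemma~\ref{lem-gen-int} again tells us that $x\in V\setminus\mathrm{Int}(V)$ says exactly that $A$ is dense along $x$ and $x$ does not meet $A$. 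Chaining these equivalences with Definition~\ref{def:1-generic} yields the corollary.

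I do not expect any genuine obstacle here; the proof is a one-line unwinding of definitions. The only point to state explicitly is that, as $A$ ranges over all c.e.\ subsets of $2^{<\omega}$, the sets $2^\omega\setminus[A]$ range over exactly the $\Pi^0_1$ classes — which is immediate from the definitions of $\Sigma^0_1$ and $\Pi^0_1$ classes recalled in the introduction — so that the quantifier ``for every c.e.\ $A$'' on the genericity side matches the quantifier ``for every $\Pi^0_1$ class $V$'' on the topological side.
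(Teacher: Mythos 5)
Your proposal is correct and is exactly the argument the paper intends: the corollary is stated without proof precisely because it is the immediate combination of Definition~\ref{def:1-generic} with Lemma~\ref{lem-gen-int}, together with the observation that complements of sets $[A]$ for c.e.\ $A$ are exactly the $\Pi^0_1$ classes. Nothing is missing.
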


One of the reasons this is interesting to mention explicitly is because a
typical example of a nowhere dense set is a closed set with its interior removed,
and the $\Pi^0_1$ sets are the simplest type of closed sets. 
Thus, the Corollary~\ref{cor-gen-int} says that $x$ is 1-generic if it is not in any 
of the simple, typical nowhere dense sets. This way of looking at 1-generic sets complements
the usual motivation of 1-genericity by forcing, and it also allows one to easily compare 1-genericity 
with weak 1-genericity (since $x$ is weakly 1-generic if it is not in any $\Pi^0_1$-class 
with empty interior, see~\cite{downey-hirschfeldt-2010}).

With this equivalence in mind, we can now also define what it means for an 
element of $[0,1]$ to be $1$-generic.

\begin{defi}\label{defi-gen-unit}
Let $x \in [0,1]$. We say that $x$ is \emph{$1$-generic} if for every $\Pi^0_1$ class 
$V \subseteq [0,1]$ we have $x \not\in V \setminus \mathrm{Int}(V)$.
\end{defi}

There is a natural `almost-homeomorphism' between $2^\omega$ and $[0,1]$: 
given an infinite sequence $x \in 2^\omega$ we have $0.x \in [0,1]$
(interpreting the sequence as a decimal expansion in binary), 
and conversely given $y \in [0,1]$ we can take the binary expansion of $y$ 
containing infinitely many 0s, which gives us an element of $2^\omega$. 
Note that the problem of nonunique expansions only occurs for rationals, 
which are not 1-generic anyway. 
It is thus natural to ask if the notions of $1$-genericity in these two spaces 
correspond via this mapping. 
The next proposition says this is indeed the case.

\begin{prop}\label{prop-gen-int}
For any irrational $x \in [0,1]$ we have that $x$ is 1-generic if and only if  
its (unique) binary expansion is $1$-generic in $2^\omega$.
\end{prop}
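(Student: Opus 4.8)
The plan is to transfer $\Pi^0_1$ classes back and forth across the binary-expansion map $b\colon 2^\omega \dashrightarrow [0,1]$, $b(y) = 0.y$, and then apply Definitions~\ref{defi-gen-unit} and \ref{def:1-generic} (in the form of Corollary~\ref{cor-gen-int}) on each side. The map $b$ is continuous, surjective, and injective away from the dyadic rationals; its inverse (taking the expansion with infinitely many $0$s) is continuous at every irrational point. So for an irrational $x$ with expansion $\xi \in 2^\omega$, a neighbourhood basis of $x$ in $[0,1]$ and a neighbourhood basis of $\xi$ in $2^\omega$ correspond to each other up to the countable (hence irrelevant, since dyadic rationals are never generic) set of dyadic points. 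The key observation to isolate early is that, because $x$ is irrational, membership of $x$ in the \emph{boundary} $V \setminus \mathrm{Int}(V)$ of a closed set is a purely local condition that is insensitive to what happens at dyadic rationals.

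First I would prove the left-to-right contrapositive: suppose $\xi$ is not $1$-generic in $2^\omega$, so by Corollary~\ref{cor-gen-int} there is a $\Pi^0_1$ class $W \subseteq 2^\omega$ with $\xi \in W \setminus \mathrm{Int}(W)$. I want to push $W$ forward to a $\Pi^0_1$ class $V \subseteq [0,1]$ witnessing that $x$ is not $1$-generic. The natural candidate is the closure $V = \overline{b(W)}$, or more concretely $V = [0,1] \setminus b([A])^\circ$ where $W = 2^\omega \setminus [A]$ — one checks that the image under $b$ of a c.e.\ open set $[A] \subseteq 2^\omega$ is, after adjoining dyadic endpoints, effectively open in $[0,1]$, because each basic clopen cylinder $[\sigma]$ maps to a dyadic subinterval of $[0,1]$ with computable endpoints. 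Then $V$ is a $\Pi^0_1$ class, $x \in V$ since $\xi \notin [A]$ and $x$ is irrational (so $x$ cannot have sneaked into $V$ only via a dyadic endpoint), and $x \notin \mathrm{Int}(V)$ because a neighbourhood of $x$ inside $V$ would pull back through the locally-homeomorphic inverse of $b$ to a neighbourhood of $\xi$ inside $W$, contradicting $\xi \notin \mathrm{Int}(W)$. Hence $x \in V \setminus \mathrm{Int}(V)$.

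For the right-to-left direction I would argue symmetrically: given a $\Pi^0_1$ class $V \subseteq [0,1]$ with $x \in V \setminus \mathrm{Int}(V)$, pull it back along $b$ to $W = b^{-1}(V) \subseteq 2^\omega$, which is a $\Pi^0_1$ class since $b$ is a computable map (the preimage of a rational open interval is a c.e.\ union of cylinders). Again using that $\xi$ is irrational, $\xi \in W$ because $b(\xi) = x \in V$, and $\xi \notin \mathrm{Int}(W)$: any cylinder $[\sigma] \subseteq W$ would have its image $b([\sigma])$, a dyadic interval, contained in $V$, and since $x = b(\xi)$ is irrational it lies in the \emph{interior} of that dyadic interval (not at an endpoint), giving a neighbourhood of $x$ inside $V$ — contradiction. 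So $\xi \in W \setminus \mathrm{Int}(W)$, and $\xi$ is not $1$-generic.

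The main obstacle, and the place to be careful, is the bookkeeping around dyadic rationals: the map $b$ genuinely fails to be a homeomorphism there (the cylinder $[\sigma]$ maps onto a \emph{closed} dyadic interval, and two cylinders can share an endpoint), so one must repeatedly invoke the irrationality of $x$ to conclude that $x$ lands in the topological interior of each relevant dyadic interval and that ``$x \in V$'' cannot be witnessed solely by a dyadic boundary point. Concretely, the clean lemma to extract is: for irrational $x$ with expansion $\xi$, and any open $U \subseteq [0,1]$, we have $x \in U$ iff $\xi \in b^{-1}(U)$ and $b^{-1}(U)$ contains a cylinder around $\xi$ iff $U$ contains a dyadic neighbourhood of $x$; once this local equivalence is in place, both directions above are immediate, and the only real content is checking that the relevant sets remain effectively open / effectively closed, which follows from the computability of $b$ and of the dyadic-interval endpoints of cylinders.
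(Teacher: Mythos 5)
Your proof is correct, but it takes a genuinely different route from the paper's. The paper restricts the binary-expansion map to the subspaces $2^\omega_-$ (sequences with infinitely many $0$s and $1$s) and $[0,1]_-$ ($[0,1]$ minus the dyadic rationals), where it is an actual homeomorphism, and then proves a clean general lemma: for $Y$ dense in $X$ and $W=V\cap Y$ one has $\mathrm{Int}_X(V)\cap Y=\mathrm{Int}_Y(W)$, so $1$-genericity relative to $X$ and relative to $Y$ agree on points of $Y$. You instead work on the full spaces and transfer $\Pi^0_1$ classes directly across $b$, pushing forward $2^\omega\setminus[A]$ to the complement of $\bigcup_{\sigma\in A}\mathrm{Int}(b([\sigma]))$ and pulling back via $b^{-1}$, invoking the irrationality of $x$ at each step to dispose of the dyadic endpoints. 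What your approach buys is that the effectivity --- the actual content of the proposition, namely that $\Pi^0_1$ classes correspond to $\Pi^0_1$ classes under the correspondence --- is verified explicitly (computable endpoints of $b([\sigma])$, computability of $b$), whereas the paper's proof treats this somewhat implicitly when it asserts that $V\cap Y$ is a $\Pi^0_1$ class in $Y$ and appeals to the induced topology. What the paper's approach buys is a reusable, purely topological density lemma that relativizes without change (which the authors exploit for $n$-genericity later), and it avoids your endpoint bookkeeping entirely. One small caution in your write-up: in the forward direction you should not literally take $b([A])^\circ$ (whose effectiveness as an open set is unclear, since the genuine interior may pick up extra dyadic points where adjacent closed intervals abut), but rather the effectively open set $\bigcup_{\sigma\in A}\mathrm{Int}(b([\sigma]))$; the two differ only at dyadic rationals, so for irrational $x$ this substitution is harmless, and your argument goes through as stated.
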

\begin{proof}
Let $2^\omega_-$ be the set of infinite binary sequences which contain infinitely many 
0s and infinitely many 1s. 
Then the `almost-homeomorphism' given above in fact restricts to a homeomorphism to 
$2^\omega_-$ and $[0,1]_-$, where $[0,1]_-$ is $[0,1]$ without the dyadic rationals. 
Therefore, $1$-genericity on $2^\omega_-$ and $[0,1]_-$ 
(which are defined as in Definition \ref{defi-gen-unit}) coincide.

Note that $2^\omega_-$ is dense in $2^\omega$ and that $[0,1]_-$ is dense in $[0,1]$. 
Thus, it is enough if we can show that if $Y \subseteq X$ is such that $Y$ is dense in $X$, 
then 1-genericity on $X$ and $Y$ coincide for elements $y \in Y$. 
Given a $\Pi^0_1$ class $V \subseteq X$, let $W=V \cap Y$. Then $W$ is a $\Pi^0_1$ class in $Y$. 
Conversely, every $\Pi^0_1$ class $W \subseteq Y$ is of the form $W = V \cap Y$ 
by definition of the induced topology.

We claim that $\mathrm{Int}_X(V) \cap Y = \mathrm{Int}_Y(W)$.
Clearly, $\mathrm{Int}_X(V) \cap Y \subseteq \mathrm{Int}_Y(W)$. Conversely, if we let $\mathrm{Int}_Y(W) = U \cap Y$ for some open $U \subseteq X$, then $U \subseteq \mathrm{Int}_X(V \cup (X \setminus Y))$. Towards a contradiction, assume that $U \cap (X \setminus V) \not= \emptyset$, then this is a nonempty open set. However, we also have $U \cap (X \setminus V) \subseteq X \setminus Y$, which contradicts the fact that $Y$ is dense in $X$. Thus, we see that $U \subseteq V$, and therefore $U \subseteq \mathrm{Int}_X(V)$.
So, $\mathrm{Int}_X(V) \cap Y = \mathrm{Int}_Y(W)$. 

So, we see that $y \not\in V \setminus \mathrm{Int}_X(V)$ if and only if $y \not\in W \setminus \mathrm{Int}_Y(W)$. This completes the proof.
\end{proof}

\section{Effective Baire class 1 functions}

In this section we will discuss what it means for a function to be of effective Baire class 1, 
and discuss some of the basic properties of these functions. First, let us recall what it 
means for a function on the reals to be computable. 
Our definitions follow Moschovakis \cite{moschovakis-2009}.

\begin{defi} \label{def:computable}
Let $f: [0,1] \to \Real$. 
We say that $f$ is \emph{computable} if for every basic open set $U$ we have that $f^{-1}(U)$ is $\Sigma^0_1$ uniformly in $U$, 
i.e.\ if there exists a computable function $\alpha: \Rat \times \Rat \to \omega$ such that 
for all $q,r \in \Rat$ we have that $f^{-1}((q,r))$ is equal to the $\Sigma^0_1$ class given by the index $\alpha(q,r)$.
\end{defi}

Definition~\ref{def:computable} is equivalent to the formulation
with computable functionals, see e.g.\ the discussion in 
Pour-El and Richards~\cite{Pour-ElRichards}. 

Functions of effective Baire class~1 are obtained by 
weakening the above definition as follows. 

\begin{defi}
A function $f: [0,1] \to \Real$ is of \emph{effective Baire class~1} if for every 
basic open set $U$ we have that $f^{-1}(U)$ is $\Sigma^0_2$ uniformly in~$U$.
\end{defi}

Replacing $\Sigma^0_2$ by $\mathbf{\Sigma}^0_2$ in the above definition, 
we obtain what is known as a function of \emph{(non-effective) Baire class 1}. 
Before we give an important example of an effective Baire class 1 function, 
let us first consider the following proposition, which gives an equivalent condition 
for a function to be of effective Baire class~1. This proposition mirrors the classical 
proposition which says that a function is of Baire class 1 if and only if it is a 
pointwise limit of continuous functions, see e.g.\ Kechris \cite[p.\ 192]{kechris-1995}.
(This does not hold for all Polish spaces; it holds for $f:X\rightarrow Y$ if 
either $X$ is zero-dimensional or $Y=\Real$.)

\begin{prop} \label{prop:effectiveBaire1}
Let $f: [0,1] \to \Real$. The following are equivalent:
\begin{enumerate}[\rm (i)]
\item $f$ is of effective Baire class 1,
\item $f$ is the pointwise limit of a uniform sequence of computable functions, 
i.e.\ there exists a sequence $f_0,f_1, \dots$ of functions 
from $[0,1]$ to $\Real$ converging pointwise to $f$ and a computable function 
$\alpha: \omega \times \Rat \times \Rat \to \omega$ such that for all 
$q,r \in \Rat$ and all $n \in \omega$ we have that $f_n^{-1}((q,r))$ is equal to the $\Sigma^0_1$ class 
given by the index $\alpha(n,q,r)$.
\end{enumerate}
\end{prop}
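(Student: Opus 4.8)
The plan is to establish the two directions separately, in both cases mimicking the classical argument (pointwise limit of continuous functions) but tracking effectivity of the indices throughout.

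For the implication (ii) $\Rightarrow$ (i), suppose $f = \lim_n f_n$ pointwise, with the $f_n$ uniformly computable via $\alpha(n,q,r)$. The key identity, coming straight from the classical proof, is that for a basic open interval $(q,r)$ one has
\[
f^{-1}((q,r)) = \bigcup_{k} \bigcup_{m} \bigcap_{n \geq m} f_n^{-1}\left(\left(q + \tfrac{1}{k},\, r - \tfrac{1}{k}\right)\right),
\]
where $k$ ranges over those naturals large enough that $q + 1/k < r - 1/k$. First I would verify this set equality: if $f(x) \in (q,r)$ then $f(x)$ is bounded away from both endpoints by some $1/k$, and since $f_n(x) \to f(x)$ eventually all $f_n(x)$ lie in the slightly shrunk interval; conversely if the right-hand side holds then the limit $f(x)$ lies in the closed shrunk interval, hence in $(q,r)$. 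Now each $f_n^{-1}((q+1/k, r-1/k))$ is a $\Sigma^0_1$ class with an index computable from $n, q, r, k$ via $\alpha$; the intersection over $n \geq m$ of a uniform sequence of $\Sigma^0_1$ classes is a $\Pi^0_2$ class, hence the $\bigcup_m$ of these is $\Sigma^0_2$, and then $\bigcup_k$ of a uniform sequence of $\Sigma^0_2$ classes is again $\Sigma^0_2$; all indices are obtained effectively from $q,r$. Hence $f^{-1}((q,r))$ is $\Sigma^0_2$ uniformly in $(q,r)$, so $f$ is of effective Baire class~1.

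For (i) $\Rightarrow$ (ii), the task is to manufacture the approximating sequence. I would use the fact that $f$ being of effective Baire class~1 gives, for each dyadic (or rational) partition of the range, a uniformly $\Sigma^0_2$ (hence by standard closure a uniformly $\mathbf{\Sigma}^0_2$) description of the preimages of the corresponding intervals. Concretely: for each $n$, cover $\Real$ by overlapping rational intervals of length roughly $2^{-n}$; write each $f^{-1}$ of such an interval as $\bigcup_m C_{n,m}$ with $C_{n,m}$ a uniformly $\Pi^0_1$ (effectively closed) class, using the $\Sigma^0_2$ index. From finitely many of these closed pieces one builds, by a partition-of-unity / interpolation argument on $[0,1]$, a computable function $f_n$ that on each piece $C_{n,m}$ approximates the relevant interval's value to within $2^{-n}$; because the pieces are (effectively) closed and the rational data is uniformly computable, $f_n$ can be chosen uniformly computable with index computable from $n$. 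One then checks $f_n(x) \to f(x)$ for every $x$: given $x$, the value $f(x)$ lies in one of the length-$2^{-n}$ intervals, $x$ lies in one of the closed pieces witnessing this, and the construction forces $|f_n(x) - f(x)| \leq O(2^{-n})$.

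I expect the main obstacle to be the (i) $\Rightarrow$ (ii) direction, specifically the book-keeping needed to turn a uniform $\Sigma^0_2$ description of preimages into an honestly \emph{uniform} family of computable real-valued functions: one must be careful that the ``stage'' at which a point enters one of the $\Pi^0_1$ pieces is handled so that the resulting $f_n$ is genuinely computable (not merely $f$-computable or limit-computable), and that the overlaps between adjacent range-intervals are reconciled so the $f_n$ are well-defined and continuous. The cleanest route is probably to appeal to the classical construction (as in Kechris) for the underlying set-theoretic skeleton and then verify that every choice made there can be performed effectively and uniformly in $n$ from the given $\Sigma^0_2$ indices; the zero-dimensionality remark in the hypothesis for general Polish targets is not needed here since the range is $\Real$, which keeps the interpolation step elementary.
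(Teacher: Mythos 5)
Your (ii) $\Rightarrow$ (i) direction follows the same classical identity as the paper, but it contains a genuine error in the complexity count. You take the inner sets to be $f_n^{-1}\bigl((q+\tfrac{1}{k},\, r-\tfrac{1}{k})\bigr)$, which are $\Sigma^0_1$; the intersection $\bigcap_{n \geq m}$ of these is then, as you yourself say, only $\Pi^0_2$, and a countable union of $\Pi^0_2$ classes is $\Sigma^0_3$, not $\Sigma^0_2$. So your reasoning as written only yields that $f^{-1}((q,r))$ is $\Sigma^0_3$, which is not what is needed. The fix --- and this is precisely the point of the paper's argument --- is to shrink to \emph{closed} rational intervals: the paper writes $f^{-1}(U) = \bigcup_{V_i \subseteq U} \bigcup_{n} \bigcap_{m \geq n} f_m^{-1}(V_i)$ where the $V_i$ enumerate the closed intervals with rational endpoints. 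Since each $f_m$ is computable, $f_m^{-1}(V_i)$ is uniformly $\Pi^0_1$, a countable intersection of uniformly $\Pi^0_1$ classes is again $\Pi^0_1$, and the double union is then honestly $\Sigma^0_2$ (the condition $V_i \subseteq U$ being decidable from the rational endpoints). The set identity survives the change: your own verification of the reverse inclusion already only concludes that the limit lands in the \emph{closure} of the shrunk interval, so replacing $(q+\tfrac{1}{k},\, r-\tfrac{1}{k})$ by $[q+\tfrac{1}{k},\, r-\tfrac{1}{k}]$ repairs the argument with no other changes.

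On (i) $\Rightarrow$ (ii): your sketch of an effectivized interpolation construction is plausible but incomplete at exactly the point you flag, namely producing genuinely computable $f_n$, uniformly, from the $\Sigma^0_2$ indices. For what it is worth, the paper does not prove this direction either; it cites Kechris \cite[Theorem 24.10]{kechris-1995} and Moschovakis \cite[Exercise 3.E.14]{moschovakis-2009} and notes that the implication is never used in the paper. So the substantive issue with your proposal is not there but in the (ii) $\Rightarrow$ (i) direction, where the hierarchy bookkeeping must be corrected as above.
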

\begin{proof}
(ii) $\to$ (i): 
Let $f_0,f_1,\dots$ be a sequence of uniformly computable functions converging to $f$ and 
let $U$ be any basic open set. Then 
$U = \bigcup_{i \in \omega,V_i \subseteq U} V_i$, 
where $V_0,V_1,\dots$ is a computable enumeration of the closed intervals with rational endpoints.
We claim:
\[
f^{-1}(U) = 
\bigcup_{V_i \subseteq U} \bigcup_{n \in \omega} 
\bigcap_{m \geq n} f_m^{-1}(V_i),
\]
which is clearly $\Sigma^0_2$ uniformly in $U$.

To prove the claim, let $x \in f^{-1}(U)$. Then $f(x) \in U$, so there exists 
$V_i \subseteq U$ such that $f(x) \in \mathrm{Int}(V_i)$, say 
$(f(x)-\varepsilon,f(x)+\varepsilon) \subseteq V_i$. 
Let $n \in \omega$ be such that for every $m \geq n$ we have that $|f_m(x) - f(x)| < \varepsilon$. 
Then for every $m \geq n$ we have that $x \in f_m^{-1}(V_i)$, which proves the first inclusion.

Conversely, let $n \in \omega$, $V_i \subseteq U$ and $x \in \bigcap_{m \geq n} f_m^{-1}(V_i)$. Then for every $m \geq n$ we have $f_m(x) \in V_i$, and since $V_i$ is closed we then also have 
$f(x) =  \lim_{m \to \infty} f_m(x) \in V_i \subseteq U$, which completes the proof of the claim.

(i) $\to$ (ii): 
This follows by effectivizing Kechris \cite[Theorem 24.10]{kechris-1995}; 
this result is also mentioned (without proof) in Moschovakis \cite[Exercise 3.E.14]{moschovakis-2009}. 
Since this implication is not used anywhere in this paper, we will not go into further detail.
\end{proof}

Using this proposition, we can now give an important example of effective Baire class 1 functions: derivatives of computable functions. This also explains our interest in them.

\begin{cor}\label{dif-baire}
Let $f: [0,1] \to \Real$ be a differentiable computable function. Then $f'$ is of effective Baire class~$1$.
\end{cor}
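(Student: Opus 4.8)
The plan is to realize $f'$ as the pointwise limit of a uniform sequence of computable functions and then invoke Proposition~\ref{prop:effectiveBaire1}. The natural choice is the difference-quotient approximation
\[
f_n(x) = \frac{f\!\left(x + 2^{-n}\right) - f(x)}{2^{-n}},
\]
which converges pointwise to $f'(x)$ at every $x$ by differentiability of $f$. The one technical nuisance is that $x + 2^{-n}$ may fall outside $[0,1]$; this is easily handled by using a one-sided quotient near the right endpoint, e.g.\ replacing $2^{-n}$ by $-2^{-n}$ when $x > 1 - 2^{-n}$, or more uniformly by working with $\bigl(f(x+h_n) - f(x)\bigr)/h_n$ where $h_n$ is chosen with the appropriate sign; since $f$ is differentiable the limit is still $f'(x)$ and one can fold the sign choice into the index computation.

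First I would check that each $f_n$ is computable, uniformly in $n$. This is where the bulk of the (routine) work lies: composition, translation by a rational $2^{-n}$, subtraction, and division by the nonzero rational constant $2^{-n}$ all preserve computability and do so effectively in the relevant parameters, so from an index for $f$ (as in Definition~\ref{def:computable}) one computes an index $\alpha(n,q,r)$ for $f_n^{-1}((q,r))$. Concretely, $f_n^{-1}((q,r)) = \{x : f(x+2^{-n}) - f(x) \in (2^{-n}q, 2^{-n}r)\}$, and the set on the right is $\Sigma^0_1$ uniformly because it is a countable union over rationals $s$ of $\bigl(T_{2^{-n}} f^{-1}((2^{-n}q + s_1, \cdot))\bigr) \cap f^{-1}((\cdot, \cdot))$-type sets, where $T_c$ denotes the shift; I would just cite that arithmetic operations on computable functions are effective and refer to Pour-El and Richards~\cite{Pour-ElRichards}, rather than spell this out.

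Second, I would verify pointwise convergence: for every $x \in [0,1]$, differentiability of $f$ at $x$ gives $\lim_{n} f_n(x) = f'(x)$ (using the one-sided quotient at the right endpoint, which still converges to the two-sided derivative $f'(x)$). Combining the two points, $(f_n)_{n\in\omega}$ witnesses condition (ii) of Proposition~\ref{prop:effectiveBaire1} for the function $f'$, so $f'$ is of effective Baire class~$1$.

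The main obstacle is entirely bookkeeping rather than conceptual: making the ``arithmetic operations are effective'' claim precise enough to produce the computable $\alpha$ and dealing cleanly with the endpoint so that one genuinely has functions $[0,1] \to \Real$ and not merely $[0, 1-2^{-n}] \to \Real$. Neither is a real difficulty; the endpoint issue in particular could even be sidestepped by extending $f$ to a slightly larger computable interval if one preferred, but the one-sided-quotient fix keeps everything inside $[0,1]$ and is cleanest to state.
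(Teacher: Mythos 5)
Your approach is the same as the paper's: take the difference quotients $f_n(x)=2^n\bigl(f(x+2^{-n})-f(x)\bigr)$, check they are uniformly computable and converge pointwise to $f'$, and apply Proposition~\ref{prop:effectiveBaire1}(ii). The one place where your preferred variant goes wrong is the endpoint fix. If you switch from the forward to the backward quotient according to whether $x>1-2^{-n}$, the resulting $f_n$ is in general discontinuous at the switching point $x=1-2^{-n}$ (the forward and backward quotients of $f$ at a fixed $x$ with fixed step $2^{-n}$ need not agree). But computability in the sense of Definition~\ref{def:computable} forces continuity, since preimages of open sets must be ($\Sigma^0_1$, hence) open; so these case-split $f_n$ would not be computable and Proposition~\ref{prop:effectiveBaire1}(ii) would not apply to them. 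The alternative you mention in passing --- extending $f$ beyond $1$ --- is what the paper actually does, and it needs one small precaution: the extension must remain differentiable at $1$ with the correct derivative, or else $f_n(x)$ for $x$ near $1$ will not converge to $f'(x)$ (a constant continuation, for instance, gives $f_n(1)=0$ for all $n$). The paper's choice $f(y)=-f(2-y)+2f(1)$ for $y>1$ is a point reflection through $(1,f(1))$, which is computable and matches both the value and the one-sided derivative at $1$, so the single formula $f_n(x)=2^n(f(x+2^{-n})-f(x))$ works on all of $[0,1]$. With that substitution your argument is complete.
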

\begin{proof}
Let $f_n(x) = 2^n(f(x + 2^{-n}) - f(x))$.
To account for the problem that for $x + 2^{-n}>1$ the value $f(x + 2^{-n})$ 
is not defined, we let $f(y) = -f(2-y) + 2f(1)$ for $y > 1$ (i.e.\ we flip and mirror $f$ on $[1,2]$).
Then the sequence $f_0,f_1,\dots$ is uniformly computable and converges pointwise to $f'$, 
so $f'$ is of effective Baire class~1 by Proposition~\ref{prop:effectiveBaire1}.
\end{proof}

\section{Continuity of Baire class 1 functions}

At the basis of this section lies the following important classical result.

\begin{thm}{\rm (Baire)}\label{thm-baire}
Let $f: [0,1] \to \Real$ be of (non-effective) Baire class~1. Then the points of discontinuity 
of $f$ form a meager $\mathbf{\Sigma}^0_2$ set.
\end{thm}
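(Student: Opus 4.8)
The plan is to prove Baire's theorem by the standard oscillation argument, but I want to keep the bookkeeping clean because the real point of stating it here is to have the $\mathbf{\Sigma}^0_2$ bound explicit (so that it can later be effectivized). First I would recall the oscillation function $\omega_f(x) = \inf_{x \in U} \operatorname{diam}(f(U))$, where $U$ ranges over open neighbourhoods of $x$, and note the elementary fact that $f$ is continuous at $x$ if and only if $\omega_f(x) = 0$. From this the set of discontinuities is
\[
D_f = \{x : \omega_f(x) > 0\} = \bigcup_{k \geq 1} D_k, \qquad D_k = \{x : \omega_f(x) \geq 1/k\},
\]
and each $D_k$ is closed (the set where oscillation is small is open), so $D_f$ is $\mathbf{\Sigma}^0_2$ regardless of any hypothesis on $f$. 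Hence the whole content of the theorem is that, \emph{for $f$ of Baire class $1$}, each $D_k$ is nowhere dense — equivalently $D_f$ is meager — and that is where the Baire class $1$ hypothesis has to be used.

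So the heart of the argument is: given a nonempty closed interval $I \subseteq [0,1]$, find a nonempty relatively open subinterval $J \subseteq I$ on which $\operatorname{diam}(f(J)) < 1/k$; this shows $D_k$ has empty interior, and being closed it is then nowhere dense. To do this I would use the characterization of Baire class $1$ as a pointwise limit of continuous functions $g_n \to f$ (valid here since the codomain is $\Real$, exactly as the paper's Proposition~\ref{prop:effectiveBaire1} notes classically). Set
\[
E_n = \bigset{x \in I : |g_m(x) - g_{n}(x)| \leq \tfrac{1}{4k} \text{ for all } m \geq n}.
\]
Each $E_n$ is closed (intersection of closed sets, by continuity of the $g_m$), and since $g_m(x)$ converges for every $x$, we have $I = \bigcup_n E_n$. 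Now $I$, as a closed subset of $[0,1]$, is a complete metric space, so the Baire category theorem applied \emph{inside $I$} gives some $n$ with $E_n$ having nonempty interior relative to $I$; pick a relatively open subinterval $J_0 \subseteq E_n$. Letting $m \to \infty$ on $J_0$ gives $|f(x) - g_n(x)| \leq \tfrac{1}{4k}$ for all $x \in J_0$. Finally shrink $J_0$ to a subinterval $J$ on which the continuous function $g_n$ varies by less than $\tfrac{1}{4k}$; then for $x, y \in J$,
\[
|f(x) - f(y)| \leq |f(x) - g_n(x)| + |g_n(x) - g_n(y)| + |g_n(y) - f(y)| \leq \tfrac{3}{4k} < \tfrac{1}{k},
\]
as required.

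The step I expect to be the main obstacle — or at least the one needing the most care — is the application of the Baire category theorem to the sets $E_n$: it is essential that this is done \emph{within the complete space $I$} rather than within $[0,1]$, since $\bigcup_n E_n = I$ rather than all of $[0,1]$, and one must remember that a subset that is nonmeager in $I$ need not be nonmeager in $[0,1]$, so the conclusion ``some $E_n$ is somewhere dense'' is only available relative to $I$. Everything else — closedness of the $E_n$ and of the $D_k$, the passage to the limit on $J_0$, the final triangle-inequality estimate — is routine. I should also remark that this proof is written so as to make transparent which pieces effectivize: with $f$ of \emph{effective} Baire class $1$ one takes the $g_n$ uniformly computable, the $E_n$ become a uniform sequence of $\Pi^0_1$ classes, and the only nonconstructive ingredient is the choice of the index $n$ for which $E_n$ is somewhere dense — which is exactly the kind of single bit of noncomputable information that $1$-genericity is designed to supply.
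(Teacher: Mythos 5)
Your proof is correct, and it is essentially the argument of Oxtoby \cite[Theorem 7.3]{oxtoby-1980} --- one of the two sources the paper cites, since the paper itself gives no proof of this theorem. All the steps check out: the $D_k$ are closed, so the $\mathbf{\Sigma}^0_2$ bound is automatic; the $E_n$ are closed with union $I$; and the Baire category theorem applied inside the complete space $I$ does the rest. (One cosmetic point: the relatively open $J\subseteq I$ you end with should be shrunk away from the endpoints of $I$ so that it is open in $[0,1]$ before you conclude $J\cap D_k=\emptyset$ from $\operatorname{diam}(f(J))<1/k$.) It is worth noting, however, that the proof the paper actually effectivizes in Theorem \ref{baire-eff} is the different and shorter route of Kechris \cite[Theorem 24.14]{kechris-1995}: write the discontinuity set as $\bigcup_n f^{-1}(U_n)\setminus\mathrm{Int}(f^{-1}(U_n))$, use the $\mathbf{\Sigma}^0_2$-preimage definition of Baire class $1$ to write $f^{-1}(U_n)=\bigcup_i V_i$ with each $V_i$ closed, and observe that each $V_i\setminus\mathrm{Int}(V_i)$ is closed and nowhere dense --- no pointwise-limit representation and no appeal to the Baire category theorem are needed, and the decomposition into sets of the form $V\setminus\mathrm{Int}(V)$ with $V$ a $\Pi^0_1$ class is exactly what Definition \ref{defi-gen-unit} asks for. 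Your closing remark about effectivization is in the right spirit but slightly misstated: in the effective setting the $E_n$ are indeed uniformly $\Pi^0_1$, but $1$-genericity does not ``supply the index $n$'' (that comes for free from $\bigcup_n E_n=[0,1]$); what it supplies is that a $1$-generic $x\in E_n$ must lie in $\mathrm{Int}(E_n)$, after which your passage to the limit and the continuity of $g_n$ at $x$ give continuity of $f$ at $x$. So your route does effectivize, but by a slightly different mechanism than you describe, and the paper's preimage-based decomposition gets there more directly.
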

\begin{proof}
See Kechris \cite[Theorem 24.14]{kechris-1995} or Oxtoby \cite[Theorem 7.3]{oxtoby-1980}.
\end{proof}

We will now effectivize this result.

\begin{thm}\label{baire-eff}
Let $f: [0,1] \to \Real$ be of effective Baire class~$1$. 
Then $f$ is continuous at every $1$-generic point.
\end{thm}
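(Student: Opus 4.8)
The plan is to prove the statement by contraposition at the level of points: I will show that if $f$ is discontinuous at $x$, then $x$ fails the defining property of $1$-generic points in Definition~\ref{defi-gen-unit}. So let $f$ be of effective Baire class~$1$, let $x\in[0,1]$ be $1$-generic, and suppose toward a contradiction that $f$ is discontinuous at $x$.

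The first step is to rephrase (dis)continuity of $f$ at $x$ in terms of preimages of rational intervals. I would record the routine equivalence: $f$ is continuous at $x$ if and only if for every pair of rationals $q<f(x)<r$ the point $x$ lies in the interior, taken relative to $[0,1]$, of $f^{-1}((q,r))$. Indeed, given $\varepsilon>0$ one picks rationals with $f(x)\in(q,r)\subseteq(f(x)-\varepsilon,f(x)+\varepsilon)$, and any neighbourhood of $x$ contained in $f^{-1}((q,r))$ witnesses the usual $\varepsilon$-$\delta$ condition; the converse is immediate. Consequently, discontinuity of $f$ at $x$ produces rationals $q<r$ with $x\in f^{-1}((q,r))$ but $x\notin\mathrm{Int}\big(f^{-1}((q,r))\big)$.

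The second step uses effective Baire class~$1$ to replace $f^{-1}((q,r))$ by a single $\Pi^0_1$ class. Write $S=f^{-1}((q,r))$; by hypothesis $S$ is a $\Sigma^0_2$ class, hence $S=\bigcup_{n}C_n$ for some countable family of $\Pi^0_1$ classes $C_n\subseteq[0,1]$. Since $x\in S$, fix $n$ with $x\in C_n$. From $C_n\subseteq S$ we get $\mathrm{Int}(C_n)\subseteq\mathrm{Int}(S)$, so $x\notin\mathrm{Int}(C_n)$; hence $x\in C_n\setminus\mathrm{Int}(C_n)$ with $C_n$ a $\Pi^0_1$ class, contradicting that $x$ is $1$-generic. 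This finishes the argument.

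Two remarks on what needs care, and where the only real subtlety lies. The single nontrivial ingredient is the decomposition of a $\Sigma^0_2$ subset of $[0,1]$ as a union of $\Pi^0_1$ classes, which is immediate from the definition of the effective Borel hierarchy on $[0,1]$ via rational intervals (and note that no uniformity is needed here, since Definition~\ref{defi-gen-unit} already quantifies over all $\Pi^0_1$ classes). It is worth stressing that this route does not invoke the classical Baire Theorem~\ref{thm-baire}: the effectivity of the $\Sigma^0_2$ decomposition together with the ``closed set minus its interior'' formulation of $1$-genericity is exactly what makes it work. The point most likely to trip one up is that the naive alternative --- showing that each set on which the oscillation of $f$ is at least $2^{-n}$ is itself a $\Pi^0_1$ class --- can fail, because interiors of $\Sigma^0_2$ classes need not be effectively open; keeping $S$ intact and passing to a single piece $C_n$ only at the very end avoids this, as does consistently reading ``interior'' as interior in $[0,1]$, so that the non-$1$-generic endpoints $0,1$ cause no trouble.
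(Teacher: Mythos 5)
Your proof is correct and follows essentially the same route as the paper's: reduce discontinuity at $x$ to $x\in f^{-1}(U)\setminus\mathrm{Int}(f^{-1}(U))$ for a basic open $U$, decompose the $\Sigma^0_2$ preimage into $\Pi^0_1$ classes, and extract a single piece $C_n$ with $x\in C_n\setminus\mathrm{Int}(C_n)$, contradicting Definition~\ref{defi-gen-unit}. The inclusion $\mathrm{Int}(C_n)\subseteq\mathrm{Int}(S)$ that you spell out is exactly the step the paper leaves as ``directly verified.''
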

\begin{proof}
We effectivize the proof from Kechris \cite[Theorem 24.14]{kechris-1995}.
Let $U_0,U_1,\dots$ be an effective enumeration of the basic open sets.
Now $f$ is continuous at $x$ if and only if the inverse image of every 
neighborhood of $f(x)$ is a neighborhood of $x$. Thus, $f$ is discontinuous at $x$ 
if and only if there exists an open set $U$ containing $f(x)$ such that every open 
set contained in $f^{-1}(U)$ does not contain $x$. 
Hence
\[\{x \in [0,1] \mid f \textrm{ is discontinous at $x$}\} = 
\bigcup_{n \in \omega} f^{-1}(U_n) \setminus \mathrm{Int}(f^{-1}(U_n)).\]

Now, let $x$ be such that $f$ is discontinuous at $x$ and let $n$ be such that 
$x \in f^{-1}(U_n) \setminus \mathrm{Int}(f^{-1}(U_n))$. 
Because $f$ is of effective Baire class~$1$, we know that $f^{-1}(U_n)$ is $\Sigma^0_2$. 
So, let $f^{-1}(U_n) = \bigcup_{i \in \omega} V_i$, where each $V_i$ is $\Pi^0_1$. 
Then it is directly verified that
\[f^{-1}(U_n) \setminus \mathrm{Int}(f^{-1}(U_n)) \subseteq \bigcup_{i \in \omega} (V_i \setminus \mathrm{Int}(V_i)).\]
Let $i$ be such that $x \in V_i \setminus \mathrm{Int}(V_i)$. Then $x$ is not 1-generic by 
Definition~\ref{defi-gen-unit}.
\end{proof}

Combining this result with the fact that derivatives of computable functions are of effective Baire class 1, we get the first implication of Theorem \ref{thm-main} as a consequence.

\begin{thm}\label{thm-main1}
If $f: [0,1] \to \Real$ is a computable function, then $f'$ is continuous at every 1-generic real.
\end{thm}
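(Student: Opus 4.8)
The plan is to combine two results already established in the excerpt. The key observation is that Theorem~\ref{thm-main1} is essentially immediate once we recognize which of the prior results apply. First I would invoke Corollary~\ref{dif-baire}: since $f$ is a differentiable computable function, its derivative $f'$ is of effective Baire class~$1$. (Here I am using the standing hypothesis of the theorem that $f$ is differentiable — without this, $f'$ need not exist, so I would make this assumption explicit, matching the convention stated after Theorem~\ref{thm-main}.) Then I would apply Theorem~\ref{baire-eff}, which says that any function of effective Baire class~$1$ is continuous at every $1$-generic point. Applying this to $f'$ yields that $f'$ is continuous at every $1$-generic real, which is exactly the statement to be proved.

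So the proof is just the two-line chain: $f$ differentiable computable $\Rightarrow$ $f'$ is effective Baire class~$1$ (Corollary~\ref{dif-baire}) $\Rightarrow$ $f'$ continuous at every $1$-generic point (Theorem~\ref{baire-eff}). There is essentially no obstacle here; all the work has been front-loaded into Corollary~\ref{dif-baire} (the difference-quotient approximation $f_n(x) = 2^n(f(x+2^{-n}) - f(x))$ together with Proposition~\ref{prop:effectiveBaire1}) and into the effectivization of Baire's theorem in Theorem~\ref{baire-eff} (the decomposition of the discontinuity set of an effective Baire class~$1$ function as a union $\bigcup_n f^{-1}(U_n) \setminus \mathrm{Int}(f^{-1}(U_n))$ and the further splitting of each such piece into a union of sets $V_i \setminus \mathrm{Int}(V_i)$ with $V_i$ a $\Pi^0_1$ class).

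The only thing worth remarking on is the minor subtlety that the theorem as stated omits the word ``differentiable,'' so in the proof I would note that we are assuming $f$ is differentiable in the sense of the paper's convention — i.e.\ classically differentiable, with $f'$ not necessarily continuous — since otherwise the statement ``$f'$ is continuous at \dots'' would be vacuous or ill-posed. With that understood, the proof is a direct appeal to the two cited results and requires no further argument. Here is the write-up:

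\begin{proof}
By assumption $f$ is differentiable, so by Corollary~\ref{dif-baire} the derivative $f'$ is of effective Baire class~$1$. By Theorem~\ref{baire-eff}, every function of effective Baire class~$1$ is continuous at every $1$-generic point. Hence $f'$ is continuous at every $1$-generic real.
\end{proof}
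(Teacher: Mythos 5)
Your proof is correct and is exactly the paper's proof: the authors also derive Theorem~\ref{thm-main1} by combining Corollary~\ref{dif-baire} with Theorem~\ref{baire-eff}. Your remark about the implicit differentiability hypothesis is a fair reading of the paper's stated convention and does not change the argument.
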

\begin{proof}
From Corollary \ref{dif-baire} and Theorem \ref{baire-eff}.
\end{proof}

\section{Functions discontinuous at non-1-generics} \label{sec:func-cons}

In this section we will prove the second implication of Theorem \ref{thm-main}. 
To this end, we will build, for each
$\Pi^0_1$ class $V$, a Volterra-style differentiable 
computable function whose derivative will fail to be continuous at the points 
whose non-1-genericity is witnessed by $V$. We have to be careful in order
to make this function computable.

\begin{thm}\label{func-cons}
Let $V$ be a $\Pi^0_1$ class. 
Then there exists a differentiable computable function 
$f: [0,1] \to \Real$ such that $f'$ is discontinuous at every  
$x \in V \setminus \mathrm{Int}(V)$.
\end{thm}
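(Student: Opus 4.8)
The plan is to adapt the classical Volterra construction to the effective setting. Since $V$ is a $\Pi^0_1$ class in $[0,1]$, its complement $[0,1]\setminus V$ is a $\Sigma^0_1$ class, hence a union $\bigcup_n (a_n,b_n)$ of a computable sequence of open rational intervals (which we may take to be pairwise disjoint, so that the $a_n,b_n$ not equal to $0$ or $1$ all lie in $V\setminus\mathrm{Int}(V)$). On each such interval $(a_n,b_n)$ I would place a scaled copy of the standard Volterra bump: start from $g(t)=t^2\sin(1/t)$, which is differentiable on $\Real$ with $g'(0)=0$ but $g'$ oscillating near $0$, and truncate it so that it is supported on a subinterval $[a_n,c_n]\subseteq(a_n,b_n)$ where $c_n$ is chosen to be a point at which $g'$ vanishes, and then reflect to also kill the bump near $b_n$. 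Setting $f$ to be this patched-together function on $\bigcup_n(a_n,b_n)$ and $f=0$ on $V$, the function $f$ is classically differentiable everywhere: on the open intervals this is clear, and at points $x\in V$ one checks $f'(x)=0$ because $|f(y)|\le (y-x)^2$ near $x$ by the quadratic bound on the bumps (one must scale each bump's amplitude by at least $(b_n-a_n)^2$ to guarantee this uniformly). The derivative $f'$ then fails to be continuous at every $x\in V\setminus\mathrm{Int}(V)$: such $x$ is a limit of endpoints $a_n$ (or of points inside the intervals $(a_n,b_n)$), and near each endpoint $f'$ takes values oscillating in a fixed range bounded away from $0$, so $f'$ cannot have limit $f'(x)=0$ there.

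The genuinely delicate part, flagged by the authors' remark ``We have to be careful in order to make this function computable,'' is verifying that $f$ is a \emph{computable} function in the sense of Definition~\ref{def:computable}, not merely continuous. The subtlety is that we only have a $\Sigma^0_1$ \emph{enumeration} of the intervals $(a_n,b_n)$, so at a finite stage we do not know whether a given rational point is in $V$ or will eventually be covered by some interval; thus we cannot decide membership in $V$ and naively reading off $f^{-1}((q,r))$ is problematic. The way around this is to exploit that $f$ vanishes on $V$ together with a modulus of continuity: I would show that from the $\Sigma^0_1$ presentation of $\bigcup_n(a_n,b_n)$ and the explicit (uniformly computable) formulas for the bumps one can compute, given a rational $x$ and precision $2^{-k}$, a rational approximation to $f(x)$ within $2^{-k}$ — to do this, search for an interval $(a_n,b_n)$ containing $x$; if one is found, compute $f(x)$ from the bump formula; if the search has not succeeded after enough steps, the amplitude bound $(b_m-a_m)^2$ forces $|f(x)|$ to be small, so output $0$. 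Making this quantitative — i.e.\ choosing the bump amplitudes decaying fast enough relative to the enumeration so that ``enough steps'' is computable — is the crux. Once $f$ is shown to be computable with a computable modulus of continuity, $f^{-1}((q,r))$ is $\Sigma^0_1$ uniformly, as required.

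I expect the main obstacle to be exactly this computability bookkeeping: one has to interleave the choice of bump sizes with the enumeration of $[0,1]\setminus V$ so that (a) the bumps on $(a_n,b_n)$ still fit inside that interval and vanish to second order at both endpoints (which constrains $c_n$ and the amplitude from \emph{below} in terms of the interval length), while (b) the total configuration remains computable and $f$ retains a computable modulus of continuity (which constrains the amplitude from \emph{above} — roughly $O((b_n-a_n)^2)$). These two constraints are compatible because the Volterra bump on an interval of length $\ell$ can be built with amplitude $\Theta(\ell^2)$ and derivative oscillating with amplitude $\Theta(\ell)$, which is bounded above yet bounded below by a function of $\ell$ alone; this is precisely why the construction works. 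After setting up the bumps, checking differentiability at points of $V$ and discontinuity of $f'$ on $V\setminus\mathrm{Int}(V)$ are short estimates of the type sketched above, so I would present those briefly and devote the bulk of the written proof to the computability verification.
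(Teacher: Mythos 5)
There is a genuine gap, and it sits exactly at the point you identify as the reason the construction works. You scale the bump on $(a_n,b_n)$ so that its amplitude is $\Theta(\ell_n^2)$ and its derivative oscillates with amplitude $\Theta(\ell_n)$, where $\ell_n=b_n-a_n$. But then the derivative oscillation \emph{tends to zero} with the interval length, and this destroys the discontinuity of $f'$ at most points of $V\setminus\mathrm{Int}(V)$. Concretely, take $V$ to be the middle-thirds Cantor set and $x\in V$ a two-sided limit point of $V$ (e.g.\ $x=1/4$). For every $\varepsilon>0$ there are points $v^-,v^+\in V$ with $x-\varepsilon<v^-<x<v^+<x+\varepsilon$, so every complementary interval meeting $(v^-,v^+)$ is contained in $(x-\varepsilon,x+\varepsilon)$ and has length less than $2\varepsilon$; hence $|f'(y)|=O(\varepsilon)$ for all $y\in(v^-,v^+)$, and $f'$ is \emph{continuous} at $x$ with $f'(x)=0$. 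This also contradicts your earlier assertion that near each endpoint $f'$ oscillates ``in a fixed range bounded away from $0$'': that is true for the classical Volterra bump $(t-a_n)^2\sin(1/(t-a_n))$, whose derivative contains the undamped term $-\cos(1/(t-a_n))$, but that version is the one whose computability you cannot rescue --- the $n$th interval may be enumerated arbitrarily late and still be long, so the tail $\sum_{m>n}f_m$ has no computable sup-norm bound and your ``search long enough, then output $0$'' step has no computable stopping time. Any decay factor $\varepsilon_n\to 0$ you introduce to fix this multiplies the (bounded) derivative oscillation by $\varepsilon_n$ and reintroduces the continuity problem above. For $t^2\sin(1/t)$ the two constraints are in genuine tension; they are not reconciled by amplitude $\Theta(\ell^2)$.

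The missing idea is to use a bump whose \emph{derivative is unbounded} near the endpoint. The paper takes $h(t)=t^2\sin(1/t^2)$, with $h'(t)=2t\sin(1/t^2)-(2/t)\cos(1/t^2)$ unbounded as $t\to 0^+$, truncates it at a computable zero $x_0$ of $h'$, mirrors it to get $g$, and sets $f_n(x)=\frac{r_n-q_n}{2^n}\,g\bigl(\frac{x-q_n}{r_n-q_n}\bigr)$. The explicit factor $2^{-n}$, indexed by enumeration order rather than interval length, gives $\|f_n\|_\infty\le 2^{-n}$; this is what makes $f=\sum_n f_n$ computable (to determine $f^{-1}((a,b))$ one only needs finitely many $f_n$) and makes the difference quotients at points of $V$ tend to $0$. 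Crucially, $f_n'(x)=2^{-n}g'\bigl(\frac{x-q_n}{r_n-q_n}\bigr)$ still takes values $\le -2^n$ arbitrarily close to $q_n$ because $g'$ is unbounded there, so $f'$ attains values $\le -1$ in every neighbourhood of every $x\in V\setminus\mathrm{Int}(V)$ while $f'(x)=0$. Your overall architecture --- disjoint rational intervals exhausting the complement, one bump per interval, $f=0$ on $V$, quadratic vanishing at the endpoints for differentiability --- matches the paper's, but without an unbounded-derivative bump the discontinuity claim fails.
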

\begin{proof} In the construction of $f$ below, we first 
define auxiliary functions $g$ and $h$. 

\textit{Construction.}
We define an auxiliary function $g$, with the property that 
$g$ is differentiable and computable, and $g'$ is continuous on $(0,1)$ 
and discontinuous at $0$ and~$1$. 

Define the function $h$ on $[0,1]$ by $h(0)=0$ and 
\[
h(x) = x^2 \sin(\frac{1}{x^2})
\]
for $x>0$. 
Then $h$ is computable and differentiable, with derivative $h'(0)=0$ and 
\[
h'(x) = 2x\sin(\frac{1}{x^2}) - 2\frac{1}{x}\cos(\frac{1}{x^2})
\]
when $x>0$. Note that $h'$ is discontinuous at $x=0$. 
Fix a computable 
$x_0 \in (0,\frac{1}{2}]$ such that 
$h'(x_0)=0$.  
Such an $x_0$ exists, because $h'$ has isolated roots, 
and isolated roots of computable functions are computable. 
Now define $g$ on $[0,1]$ by
\begin{equation*}
g(x) = \begin{cases} 
0      & \text{if } x=0\\
h(x)   & \text{if } x \in (0,x_0]\\
h(x_0) & \text{if } x \in [x_0,1-x_0]\\
h(1-x) & \text{if } x \in [1-x_0,1)\\
0      & \text{if } x=1.
\end{cases}
\end{equation*}
Then $g$ is a differentiable computable function, with derivative
\begin{equation*}
g'(x) = \begin{cases} 
0        & \text{if } x=0\\
h'(x)    & \text{if } x \in (0,x_0]\\
0        & \text{if } x \in [x_0,1-x_0]\\
-h'(1-x) & \text{if } x \in [1-x_0,1)\\
0        & \text{if } x=1.
\end{cases}
\end{equation*}
In particular, we see that $g'$ is continuous exactly on $(0,1)$. 
We will use $g$ to construct~$f$.

For the given $\Pi^0_1$ class $V$, let $U = [0,1] \setminus V$, and
fix computable enumerations $q_0,q_1,\dots$ and $r_0,r_1,\dots$ of rational numbers in $[0,1]$ 
such that $U = \bigcup_{n \in \omega} [q_n,r_n]$ and such that the $(q_n,r_n)$ are pairwise disjoint.
We will construct $f$ as a sum of a sequence $f_0,f_1,\dots$ of uniformly computable functions. 
We define $f_n$ by:
\begin{equation} \label{eq:f}
f_n(x) = \begin{cases} 
0 & \text{if } x \in [0,q_n]\\
\displaystyle
\frac{r_n-q_n}{2^n} \, g\left(\frac{x-q_n}{r_n-q_n}\right) & \text{if } x \in [q_n,r_n]\\
0 & \text{if } x \in [r_n,1].
\end{cases}
\end{equation}
Finally, we let $f = \sum_{n=0}^\infty f_n$.

\bigskip
\textit{Verification.}
We first show that $f$ is computable. 
To this end, first observe that each $f_n$ is 
supported on $(q_n,r_n)$, and therefore the supports of the different $f_n$ are disjoint. 
Furthermore, each $f_n$ is bounded by $2^{-n}$.

Let $(a,b)$ be a basic open subset of $\Real$. 
We distinguish two cases. First, assume $0 \not\in (a,b)$. We assume $a > 0$, 
the case $b < 0$ is proven in a similar way. Let $n \in \omega$ be such that $2^{-n} < a$. 
Then, since the supports of the $f_m$ are disjoint, and each $f_m$ is bounded by $2^{-m}$, 
we have 
\[f^{-1}((a,b)) = (f_0 + \dots + f_n)^{-1}((a,b)),\]
which is $\Sigma^0_1$ because a finite sum of computable functions is computable.

In the second case, we have $0 \in (a,b)$. Let $n$ be such that $|a|,|b| \geq 2^{-n}$. 
Then, again because the supports of the $f_m$ are disjoint, we see that if $x$ is not in the
support of any $f_m$ for $m \leq n$ then certainly $f(x) \in (a,b)$. Therefore we have
\[
f^{-1}((a,b)) = (f_0 + \dots + f_{n})^{-1}((a,b)) \cup 
\bigcap_{m\leq n} ([0,1] \setminus [q_m,r_m]),
\]
which is also $\Sigma^0_1$. 
It is clear that the case distinction is uniformly 
computable, so it follows that $f$ is computable.

\bigskip
Next, we check that $f$ is differentiable. We first note that every $f_n$ is differentiable, 
because $g$ is differentiable. 
Let $x \in [0,1]$. We distinguish two cases. 
First, if $x$ is in some $(q_n,r_n)$ then it is immediate that 
$f$ is differentiable at $x$ with derivative $f_n'(x)$, 
because the intervals $(q_n,r_n)$ are disjoint. 
Next, we consider the case where $x$ is not in any interval $(q_n,r_n)$. 
Note that in this case we have $f(x)=0$. Fix $m \in \omega$. Then we have:
\begin{align}
\notag\lim_{y \to x} \left|\frac{f(y)}{y-x}\right| &\leq 
\lim_{y \to x} \left|\frac{(f_0 + \dots + f_m)(y)}{y-x}\right| + 
\lim_{y \to x} \left|\frac{(f_{m+1} + f_{m+2} + \dots)(y)}{y-x}\right|.\\
\intertext{Because $f_0 + \dots + f_{m}$ is differentiable at $x$ with derivative $0$, this is equal to:}
&
\lim_{y \to x} \left|\frac{(f_{m+1} + f_{m+2} + \dots)(y)}{y-x}\right|.\label{eqn1}
\end{align}

To show that this limit is 0, we will prove that it is bounded by
$\frac{1}{2^{m}(1-x_0)}$ for every~$m$. 
Let $y \in [0,1]$ be distinct from $x$. Let us assume that $x < y$;  
the other case is proven in the same way. 
If $y$ is not in any $(q_n,r_n)$ for $n \geq m+1$ then $(f_{m+1} + f_{m+2} + \dots)(y)=0$. 
Otherwise, there is exactly one such $n$. Then:
\[\left|\frac{(f_{m+1} + f_{m+2} + \dots)(y)}{y-x}\right| = 
\left|\frac{f_n(y)}{y-x}\right| \leq \left|\frac{f_n(y)}{y-q_n}\right|,\]
where the last inequality follows from the fact that $x$ does not lie in $(q_n,r_n)$.
We distinguish three cases.
First, if $z = \frac{y-q_n}{r_n-q_n} \in (0,x_0]$, then
\[\left|\frac{f_n(y)}{y-q_n}\right| = \left|\frac{2^{-n}(r_n-q_n) g(z)}{y-q_n}\right|
= \left|2^{-n}z \sin(z^{-2})\right| 
\leq 2^{-n} \leq \frac{1}{2^{m}(1-x_0)}.\] 
Next, if $z \in [x_0,1-x_0]$ (which is nonempty because $x_0 \leq \frac{1}{2}$), then 
\[\left|\frac{f_n(y)}{y-q_n}\right| \leq \frac{2^{-n}(r_n-q_n)x_0^2}{y-q_n}
= \frac{2^{-n}x_0^2}{z} \leq  x_0 2^{-n} \leq \frac{1}{2^{m}(1-x_0)}\]
where we use the fact that $z \geq x_0$.
Finally, if $z \in [1-x_0,1]$, then
\[\left|\frac{f_n(y)}{y-q_n}\right| = \left|\frac{2^{-n}(r_n-q_n) h(1-z)}{y-q_n}\right|
\leq \frac{1}{2^n z} \leq \frac{1}{2^{n}(1-x_0)} \leq \frac{1}{2^{m}(1-x_0)}.\]
Combining this with \eqref{eqn1} we see that $\lim_{y \to x} \left|\frac{f(y)}{y-x}\right| \leq \frac{1}{2^{m}(1-x_0)}$. 
Since $m$ was arbitrary this shows that $f$ is differentiable at $x$, with derivative $f'(x)=0$.

\bigskip
Finally, we need to verify that $f'$ is discontinuous at $x$ for all $x \in V \setminus \mathrm{Int}(V)$. 
Therefore, let $x \in V \setminus \mathrm{Int}(V)$. 
Then every open set $W$ containing $x$ has nonempty intersection $W\cap U$ (recall that $U = [0,1] \setminus V$), 
but this intersection does not contain $x$. We have shown above that $f'(x) = 0$. 
We will show that for every open interval $I$ containing $x$ there is a point 
$y \in I$ such that $f'(y) \leq -1$, which clearly shows that 
$f'$ cannot be continuous at $x$. Fix an open interval $I$ containing $x$. 
Then $I\cap U\neq\emptyset$, so 
there is an $n \in \omega$ such that $I \cap [q_n,r_n]$ is nonempty. 
Note that $I$ contains $x$ and therefore $I$ cannot be a subinterval of $[q_n,r_n]$. 
Therefore there exists a $q_n < s < r_i$ such that either $[q_n,s) \subseteq I$ or $(s,r_n] \subseteq I$. We will assume the first case; the second case is proven in a similar way.

Note that on $[q_n,s)$ the function $f'$ is equal to $f_n'$. For $y \in (q_n,s)$ we thus have:
\[f'(y) = 2^{-n} g'((y-q_n)/(r_n-q_n)).\]
So, we need to show that there is a $y \in (q_n,s)$ such that $g'((y-q_n)/(r_n-q_n)) \leq -2^n$, or equivalently, that there is a $z \in (0,(s-q_n)/(r_n-q_n))$ such that $g'(z) \leq -2^n$. Without loss of generality, $(s-q_n)/(r_n-q_n) < x_0$. Let $k \geq n$ be such that 
$2^{-k} \leq \frac{s-q_n}{r_n-q_n}$. Then:
\begin{align*}
g'\left(1/\left(2^{k}\sqrt{\pi}\right)\right) &= \frac{1}{2^{k-1} \sqrt{\pi}} \sin(2^{2k}\pi) - 2^{k+1}\sqrt{\pi} \cos(2^{2k}\pi)\\
&=- 2^{k+1}\sqrt{\pi}
\leq - 2^k \leq -2^{n}.
\end{align*}
This completes the verification.
\end{proof}

\begin{thm}\label{thm-main2}
If $x \in [0,1]$ is such that every differentiable computable function 
$f: [0,1] \to \Real$ has continuous derivative at $x$, then $x$ is 1-generic.
\end{thm}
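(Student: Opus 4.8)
The plan is to prove the contrapositive, which turns out to be almost immediate given the machinery already developed. Assume $x \in [0,1]$ is not 1-generic. By Definition~\ref{defi-gen-unit}, this means precisely that there exists a $\Pi^0_1$ class $V \subseteq [0,1]$ with $x \in V \setminus \mathrm{Int}(V)$. So the non-1-genericity of $x$ is ``witnessed'' by a concrete effectively closed set, and this is exactly the hypothesis that Theorem~\ref{func-cons} is designed to consume.

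Next I would apply Theorem~\ref{func-cons} to this $V$ to obtain a differentiable computable function $f : [0,1] \to \Real$ whose derivative $f'$ is discontinuous at every point of $V \setminus \mathrm{Int}(V)$. Since $x$ belongs to $V \setminus \mathrm{Int}(V)$, we conclude that $f'$ is discontinuous at $x$. But then $f$ is a differentiable computable function whose derivative fails to be continuous at $x$, contradicting the assumption on $x$. Hence $x$ must be 1-generic.

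There is essentially no obstacle remaining at this stage: all the genuine difficulty — constructing the Volterra-style function and verifying that it is simultaneously computable, everywhere differentiable, and has derivative discontinuous on $V \setminus \mathrm{Int}(V)$ — was carried out in Theorem~\ref{func-cons}. The only point that merits a word is the direction in which Definition~\ref{defi-gen-unit} is used: $x$ being 1-generic means $x \notin V \setminus \mathrm{Int}(V)$ for \emph{every} $\Pi^0_1$ class $V$, so its negation immediately supplies the single class $V$ we feed into Theorem~\ref{func-cons}. Together with Theorem~\ref{thm-main1}, this completes the proof of the main result, Theorem~\ref{thm-main}.
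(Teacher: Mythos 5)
Your proposal is correct and is essentially identical to the paper's own proof: both argue by contrapositive, extract a $\Pi^0_1$ class $V$ with $x \in V \setminus \mathrm{Int}(V)$ from the failure of 1-genericity, and apply Theorem~\ref{func-cons} to produce the required function. Nothing is missing.
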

\begin{proof}
If $x$ is not 1-generic, then there is a $\Pi^0_1$ class $V$ such that 
$x\in V\setminus\mathrm{Int}(V)$. 
Applying Theorem~\ref{func-cons} to $V$ gives a differentiable computable function $f$ 
for which $f'$ is discontinuous at~$x$.
\end{proof}

\section{$n$-Genericity}

The notion of 1-genericity (Definition~\ref{def:1-generic}) 
corresponds to the first level of the arithmetical hierarchy. 
Higher genericity notions can be defined using forcing conditions 
from higher levels of the arithmetical hierarchy. 
As for 1-genericity, an equivalent formulation can be given as 
follows, see Jockusch~\cite{Jockusch}:

\begin{defi} 
An element $x \in 2^\omega$ is \emph{$n$-generic} if $x$ meets every 
$\Sigma^0_n$ set of strings $A \subseteq 2^{<\omega}$ that is dense along $x$. 
\end{defi}

As usual, let $\emptyset'$ denote the halting set, and let 
$\emptyset^{(n)}$ denote the $n$-th jump. 
Since a $\Sigma^0_n$ set of strings is the same as a 
$\Sigma^0_1$ set of strings relative to $\emptyset^{(n-1)}$, 
a set is $n$-generic if and only if it is 1-generic relative to $\emptyset^{(n-1)}$. 

Corollary~\ref{cor-gen-int} relativizes to: 

\begin{prop}
For any $x \in 2^\omega$ we have that $x$ is n-generic if and only if
for every $\Pi^{0,\emptyset^{(n-1)}}_1$ class $V \subseteq 2^\omega$ we have 
$x \not\in V \setminus \mathrm{Int}(V)$.
\end{prop}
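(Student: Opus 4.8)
The statement to prove is the relativization of Corollary~\ref{cor-gen-int}, characterizing $n$-genericity in $2^\omega$ via $\Pi^0_1$ classes relative to $\emptyset^{(n-1)}$. The plan is to observe that the entire chain of reasoning leading from Definition~\ref{def:1-generic} through Lemma~\ref{lem-gen-int} to Corollary~\ref{cor-gen-int} is purely topological once one fixes the relevant complexity class of sets of strings, and therefore goes through verbatim with every occurrence of ``computably enumerable'' replaced by ``c.e.\ relative to $\emptyset^{(n-1)}$'', equivalently ``$\Sigma^0_n$''.

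First I would recall the two facts already noted in the text: a $\Sigma^0_n$ set of strings is exactly a $\Sigma^0_1$ set of strings relative to $\emptyset^{(n-1)}$, so by definition $x$ is $n$-generic if and only if it is $1$-generic relative to the oracle $\emptyset^{(n-1)}$; and the $\Pi^{0,\emptyset^{(n-1)}}_1$ classes of $2^\omega$ are precisely the complements of sets $[A]$ with $A\subseteq 2^{<\omega}$ c.e.\ in $\emptyset^{(n-1)}$. Next I would invoke Lemma~\ref{lem-gen-int}, whose proof uses no computability at all: for any $A\subseteq 2^{<\omega}$ and $V=2^\omega\setminus[A]$, the set $A$ is dense along $x$ and $x$ does not meet $A$ precisely when $x\in V\setminus\mathrm{Int}(V)$. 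Combining these two observations: $x$ is $n$-generic iff $x$ meets every $\emptyset^{(n-1)}$-c.e.\ set $A$ dense along $x$, iff there is no $\emptyset^{(n-1)}$-c.e.\ set $A$ that is dense along $x$ and not met by $x$, iff there is no $\emptyset^{(n-1)}$-c.e.\ set $A$ with $x\in (2^\omega\setminus[A])\setminus\mathrm{Int}(2^\omega\setminus[A])$, iff there is no $\Pi^{0,\emptyset^{(n-1)}}_1$ class $V$ with $x\in V\setminus\mathrm{Int}(V)$. This is exactly the claim.

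There is essentially no obstacle here; the only point requiring a moment's care is the correspondence between $\emptyset^{(n-1)}$-c.e.\ sets of strings and $\Pi^{0,\emptyset^{(n-1)}}_1$ classes, namely that every such class arises as $2^\omega\setminus[A]$ for some $\emptyset^{(n-1)}$-c.e.\ $A$, which is immediate from the definition of $\Pi^0_1$ class relativized to an oracle. Since the argument is a routine relativization of Corollary~\ref{cor-gen-int}, in the paper itself it suffices to say exactly that: the proof of Corollary~\ref{cor-gen-int} relativizes directly, using that $n$-genericity of $x$ is the same as $1$-genericity of $x$ relative to $\emptyset^{(n-1)}$ together with Lemma~\ref{lem-gen-int}.
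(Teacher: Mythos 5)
Your proposal is correct and matches the paper's approach exactly: the paper gives no separate proof, simply noting that Corollary~\ref{cor-gen-int} relativizes, using the observation that $n$-genericity is $1$-genericity relative to $\emptyset^{(n-1)}$ together with the purely topological Lemma~\ref{lem-gen-int}. Your write-up just makes this routine relativization explicit.
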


Note that in general a $\Pi^{0,\emptyset^{(n-1)}}_1$ class in $2^\omega$ is not 
the same as a $\Pi^0_n$ class, since the latter need not even be closed. 
(And even if one {\em assumes\/} that the class is closed the notions are not the same, 
see \cite[p76]{downey-hirschfeldt-2010}.) 

Given this equivalence, we can now generalize Definition~\ref{defi-gen-unit} to: 

\begin{defi} \label{def:n-generic}
Let $x \in [0,1]$. We say that $x$ is \emph{$n$-generic} if for every 
$\Pi^{0,\emptyset^{(n-1)}}_1$ class $V \subseteq [0,1]$ we have 
$x \not\in V \setminus \mathrm{Int}(V)$.
\end{defi}

Further justification for this definition comes from the fact that 
Proposition~\ref{prop-gen-int} relativizes: An irrational $x\in [0,1]$ 
is $n$-generic according to Definition~\ref{def:n-generic} 
if and only if its binary expansion is $n$-generic in $2^\omega$. 

It is straightforward to check that the results of all the previous 
sections relativize to an arbitrary oracle $A$. This gives the 
following relativized version of Theorem~\ref{thm-main}:

\begin{thm}
A real $x \in [0,1]$ is $1$-generic relative to $A$ if and only if for every
differentiable $A$-computable function $f: [0,1] \to \Real$, 
$f'$ is continuous at $x$.
\end{thm}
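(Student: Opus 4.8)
The plan is to obtain the relativized theorem simply by checking that every definition, lemma, and construction in the preceding sections is algorithmically uniform in its data, so that replacing ``computable'' by ``$A$-computable'' everywhere goes through verbatim. Concretely, I would first observe that Definition~\ref{def:computable} relativizes: $f$ is \emph{$A$-computable} if $f^{-1}((q,r))$ is $\Sigma^{0,A}_1$ uniformly in $q,r$, and likewise $f$ is of \emph{effective Baire class~$1$ relative to $A$} if $f^{-1}(U)$ is $\Sigma^{0,A}_2$ uniformly in $U$. With these definitions in hand, the relativization of Proposition~\ref{prop:effectiveBaire1} and of Corollary~\ref{dif-baire} is immediate, since both proofs only manipulate indices for $\Sigma^0_1$ and $\Sigma^0_2$ classes by computable operations, and such operations remain computable (hence certainly $A$-computable) in the relativized setting; the difference quotients $f_n(x)=2^n(f(x+2^{-n})-f(x))$ used in Corollary~\ref{dif-baire} are $A$-computable whenever $f$ is.

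Next I would relativize the two halves of Theorem~\ref{thm-main}. For the easy direction, Theorem~\ref{baire-eff} relativizes because its proof expresses the discontinuity set of $f$ as $\bigcup_n f^{-1}(U_n)\setminus\mathrm{Int}(f^{-1}(U_n))$ and then decomposes each $\Sigma^{0,A}_2$ class $f^{-1}(U_n)$ as a union of $\Pi^{0,A}_1$ classes $V_i$, concluding that a discontinuity point lies in some $V_i\setminus\mathrm{Int}(V_i)$ and hence is not $1$-generic relative to $A$ by (the relativization of) Definition~\ref{defi-gen-unit}; combined with the relativized Corollary~\ref{dif-baire} this yields that $f'$ is continuous at every $A$-$1$-generic point for $A$-computable $f$. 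For the hard direction, I would run the construction in Theorem~\ref{func-cons} with $A$ as an oracle: given a $\Pi^{0,A}_1$ class $V$, write $U=[0,1]\setminus V=\bigcup_n(q_n,r_n)$ with pairwise disjoint rational intervals enumerated $A$-computably, build the auxiliary functions $h$, $g$ (note $h$ and the computable root $x_0$ of $h'$ do not depend on $A$ at all, so they are unchanged), and form $f=\sum_n f_n$ as in~\eqref{eq:f}. The verification that $f$ is $A$-computable, differentiable everywhere with $f'=0$ off the intervals, and that $f'$ is discontinuous at every $x\in V\setminus\mathrm{Int}(V)$ is word-for-word the same, since every estimate in that proof is about the analytic behaviour of $g$ and the geometry of the intervals, not about the complexity of the enumeration. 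Thus the relativized Theorem~\ref{thm-main2} follows, and putting the two directions together gives the statement.

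I do not anticipate a genuine mathematical obstacle here; the only thing requiring care is the bookkeeping, namely confirming at each step that the parameters introduced (indices, enumerations of intervals, the sequence $f_n$, the case distinctions in the computability proof) are produced by operations that are themselves computable relative to $A$ and that the objects independent of the oracle—such as $h$, $x_0$, $g$, and the numerical bounds like $\tfrac{1}{2^m(1-x_0)}$—remain literally the same. In particular one should note that the ``homeomorphism'' arguments of Section~2 and Proposition~\ref{prop-gen-int} relativize because they are purely topological, so the relativized notion of $n$-genericity in $[0,1]$ agrees with the one on $2^\omega$ via binary expansion. Having checked all this, the theorem is an immediate corollary of the relativized Theorems~\ref{thm-main1} and~\ref{thm-main2}, with $n$-genericity being the special case $A=\emptyset^{(n-1)}$.
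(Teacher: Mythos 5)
Your proposal is correct and follows exactly the route the paper takes: the paper's entire ``proof'' of this theorem is the one-line assertion that all the results of the previous sections relativize to an arbitrary oracle $A$, and your write-up simply makes explicit the bookkeeping (which objects carry the oracle and which, like $h$, $x_0$, $g$, and the numerical bounds, are oracle-independent) that the paper leaves to the reader. No discrepancy with the paper's argument.
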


Taking $A=\emptyset^{(n-1)}$, this immediately gives the following 
characterization of $n$-genericity:

\begin{cor}
A real $x \in [0,1]$ is $n$-generic if and only if for every
differentiable $\emptyset^{(n-1)}$-computable function $f: [0,1] \to \Real$, 
$f'$ is continuous at $x$.
\end{cor}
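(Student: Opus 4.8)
The plan is to read off this corollary from the relativized version of Theorem~\ref{thm-main} stated just above, specializing the oracle to $A=\emptyset^{(n-1)}$. The only substantive point to check is that ``$x$ is $1$-generic relative to $\emptyset^{(n-1)}$'' says the same thing as ``$x$ is $n$-generic'' in the sense of Definition~\ref{def:n-generic}.

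For this I would invoke the observation already recorded at the start of this section: a $\Sigma^0_n$ set of strings is exactly a $\Sigma^0_1$ set of strings relative to $\emptyset^{(n-1)}$. Hence, for $x\in 2^\omega$, being $n$-generic is literally the relativization of Definition~\ref{def:1-generic} to $\emptyset^{(n-1)}$; and passing to $[0,1]$, the relativization of Definition~\ref{defi-gen-unit} to $\emptyset^{(n-1)}$ asserts that $x\notin V\setminus\mathrm{Int}(V)$ for every $\Pi^0_1$ class $V\subseteq[0,1]$ relative to $\emptyset^{(n-1)}$, i.e.\ for every $\Pi^{0,\emptyset^{(n-1)}}_1$ class. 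That is precisely Definition~\ref{def:n-generic}, so the two phrasings of $n$-genericity agree on $[0,1]$. Since, as already remarked, all definitions and results of the previous sections relativize uniformly to an arbitrary oracle $A$, I may then apply the relativized Theorem~\ref{thm-main} with $A=\emptyset^{(n-1)}$: the real $x$ is $1$-generic relative to $\emptyset^{(n-1)}$ if and only if every differentiable $\emptyset^{(n-1)}$-computable function $f:[0,1]\to\Real$ has continuous derivative at $x$. Combining this equivalence with the identification above gives the corollary.

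There is no real obstacle here; the statement is a repackaging of the relativized main theorem. The only point deserving a word of care is the match between the two definitions of $n$-genericity on $[0,1]$, and that is settled by the remark that ``$\Pi^0_1$ relative to $\emptyset^{(n-1)}$'' coincides with $\Pi^{0,\emptyset^{(n-1)}}_1$ — the strictly weaker notion of a $\Pi^0_n$ class (which need not even be closed) plays no role.
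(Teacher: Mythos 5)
Your proposal is correct and matches the paper's argument: the corollary is obtained by setting $A=\emptyset^{(n-1)}$ in the relativized main theorem, with the identification of ``1-generic relative to $\emptyset^{(n-1)}$'' and ``$n$-generic'' supplied by Definition~\ref{def:n-generic} and the remarks preceding it. Your extra care in spelling out that identification is consistent with, and slightly more explicit than, what the paper does.
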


Also, taking all $n$ together, we see that a real $x$ is 
arithmetically generic if and only if every differentiable 
arithmetical function has continuous derivative at~$x$.

\section{Multiply differentiable functions}\label{sec-mult-dif}

We have characterized $1$-genericity using the continuity of
the derivatives of (once) differentiable computable functions. One might wonder: what kind of effective
genericity for $x$ corresponds to every twice differentiable, computable function having continuous second derivative at $x$?
Or, more generally, what corresponds to every $n$ times differentiable, computable function having continuous $n$th derivative at $x$? It turns out
that the answer is always 1-genericity. To show this we will need the following proposition, which
essentially tells us that the case for $n > 2$ collapses to the case $n=2$.

\begin{prop}\label{c2-dif-comp}
Let $f: [0,1] \to \Real$ be computable and twice continuously differentiable. Then $f'$ is computable.
\end{prop}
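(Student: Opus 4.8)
The plan is to recover $f'$ from $f$ by using the second-order information that $f''$ provides, exploiting the fact that a symmetric difference quotient of $f$ converges to $f'$ with an error controlled by $f''$. Concretely, I would work with the approximations
\[
g_n(x) = \frac{f(x + 2^{-n}) - f(x - 2^{-n})}{2 \cdot 2^{-n}},
\]
(using the flip-and-mirror trick from the proof of Corollary~\ref{dif-baire} to extend $f$ past the endpoints, so that $g_n$ is defined on all of $[0,1]$). These $g_n$ form a uniformly computable sequence, since $f$ is computable and the operations involved are computable. By Taylor's theorem with Lagrange remainder, for each $x$ there are $\xi_1 \in (x, x+2^{-n})$ and $\xi_2 \in (x - 2^{-n}, x)$ with
\[
g_n(x) = f'(x) + \frac{2^{-n}}{4}\bigl(f''(\xi_1) - f''(\xi_2)\bigr),
\]
so that $|g_n(x) - f'(x)| \leq \frac{2^{-n}}{2} \|f''\|_\infty$ for all $x$ simultaneously. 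Hence $g_n \to f'$ uniformly, at a computable rate, and $\|f''\|_\infty$ is finite because $f''$ is continuous on a compact interval.

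From here I would argue that a uniform limit of uniformly computable functions, with a computable modulus of convergence, is itself computable: given a rational $q$ and a target $U = (q,r)$, one can decide membership $f'(x) \in U$ by approximating $f'(x)$ to within any desired precision via $g_n(x)$ for $n$ large enough, and the index for $g_n^{-1}$ of a slightly shrunk interval can be combined uniformly. A cleaner way to phrase this: for rationals $q < r$, we have
\[
(f')^{-1}\bigl((q,r)\bigr) = \bigcup_{n} \Bigl\{ x : g_n(x) \in \bigl(q + 2^{-n}\|f''\|_\infty,\; r - 2^{-n}\|f''\|_\infty\bigr) \Bigr\},
\]
provided we also know (a lower bound on) a rational bound $B \geq \|f''\|_\infty$, which we may compute since $f''$ is a computable continuous function on $[0,1]$ (a computable continuous function on a computable compact set attains a computable sup). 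Each set in the union is $\Sigma^0_1$ uniformly in $n, q, r$, so the union is $\Sigma^0_1$ uniformly in $q, r$, giving computability of $f'$ by Definition~\ref{def:computable}. (One checks the reverse inclusion: if $x$ lies in the set for some $n$, then $|f'(x) - g_n(x)| \leq 2^{-n} B$ forces $f'(x) \in (q,r)$; conversely if $f'(x) \in (q,r)$ then for $n$ large the shrunk interval still contains $g_n(x)$.)

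The main obstacle I anticipate is the bookkeeping needed to see that $f''$, being a computable function (it is of effective Baire class $1$ by Corollary~\ref{dif-baire}, but in fact here it is continuous, hence computable outright — one should justify that a continuous derivative of a computable function is computable, or simply note $f''$ is the uniform limit of its own difference quotients too), yields a genuinely computable rational bound $B$; and relatedly, making the ``uniform limit with computable modulus is computable'' step fully rigorous in the framework of Definition~\ref{def:computable} rather than in the functional framework. Both are standard facts of computable analysis, so I would cite Pour-El and Richards~\cite{Pour-ElRichards} for the principle that a computably convergent sequence of computable functions has a computable limit, and spend the bulk of the argument on the Taylor-remainder estimate, which is the only genuinely quantitative input.
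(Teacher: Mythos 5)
The paper does not prove this proposition at all; it simply cites Pour-El and Richards. Your proposal is essentially the standard proof of that cited result: a symmetric difference quotient converges to $f'$ uniformly at rate $O(2^{-n}\|f''\|_\infty)$ by Taylor's theorem, and a uniformly computable sequence converging with a computable modulus has a computable limit. That skeleton is correct, and the Taylor-remainder computation and the description of $(f')^{-1}((q,r))$ as an effective union are fine.

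There is, however, one genuine error in how you obtain the bound $B$. You assert that $f''$ is ``continuous, hence computable outright,'' and propose to justify this via the principle that ``a continuous derivative of a computable function is computable.'' That principle is false, and it is exactly the reason the proposition requires \emph{twice} continuous differentiability: there is a computable $C^1$ function $g$ on $[0,1]$ whose (continuous) derivative is not computable, and taking $f(x)=\int_0^x g$ gives a computable $C^2$ function whose second derivative is continuous but not computable. So you cannot in general compute $\|f''\|_\infty$, and if your principle were true the proposition would hold under the weaker hypothesis that $f'$ is merely continuous, which it does not. The repair is cheap but important to state correctly: the proposition only claims that $f'$ is computable, not that it is computable uniformly in an index for $f$, so you may fix \emph{any} integer $B\geq\|f''\|_\infty$ (one exists since $f''$ is continuous on a compact interval) and hardwire it as a constant into the algorithm; no computation of $B$ from $f$ is needed. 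A second, minor point: the flip-and-mirror extension $f(y)=-f(2-y)+2f(1)$ is $C^1$ but not $C^2$ across the endpoint (the one-sided second derivatives are $\pm f''(1)$), so Taylor with Lagrange remainder does not literally apply when $(x-2^{-n},x+2^{-n})$ straddles an endpoint; but the extended $f'$ is still Lipschitz with constant $B$, which gives the same estimate $|g_n(x)-f'(x)|\leq 2^{-n}B$ via the mean value theorem applied piecewise, so this is only a presentational issue.
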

\begin{proof}
See e.g.\ Pour-El and Richards \cite[Theorem 1.2]{Pour-ElRichards}.
\end{proof}

If the second derivative of a computable function exists, it is easy to see that it is of effective Baire class 2 (i.e.\ a pointwise limit
of a computable sequence of functions of effective Baire class 1), by similar arguments as in the proof of
Corollary \ref{dif-baire} However, using the following proposition we can easily see that the second derivative of a computable function
is in fact of effective Baire class 1.

\begin{prop}\label{dif2-baire}
Let $f: [0,1] \to \Real$ be twice differentiable. Then
\[f''(x) = \lim_{h \to 0} \frac{f(x+h) + f(x-h) - 2f(x)}{h^2}.\]
\end{prop}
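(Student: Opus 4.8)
The plan is to reduce the second symmetric difference quotient to ordinary difference quotients and apply Taylor's theorem (or equivalently, two applications of the mean value theorem / L'H\^opital). Concretely, I would fix $x$ and consider the function $\phi(h) = f(x+h) + f(x-h) - 2f(x)$, which is twice differentiable in $h$ (as a composite and sum of twice-differentiable functions) and satisfies $\phi(0) = 0$, $\phi'(h) = f'(x+h) - f'(x-h)$, $\phi'(0) = 0$, and $\phi''(h) = f''(x+h) + f''(x-h)$, so $\phi''(0) = 2f''(x)$. I also set $\psi(h) = h^2$, with $\psi(0) = \psi'(0) = 0$ and $\psi''(0) = 2$.

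The key step is then a double application of the Cauchy mean value theorem. Applying it once to $\phi$ and $\psi$ on the interval between $0$ and $h$ gives a point $\xi_1$ strictly between $0$ and $h$ with
\[
\frac{\phi(h)}{\psi(h)} = \frac{\phi(h) - \phi(0)}{\psi(h) - \psi(0)} = \frac{\phi'(\xi_1)}{\psi'(\xi_1)}.
\]
Applying it a second time to $\phi'$ and $\psi'$ on the interval between $0$ and $\xi_1$ gives a point $\xi_2$ strictly between $0$ and $\xi_1$ (hence $|\xi_2| < |h|$) with
\[
\frac{\phi'(\xi_1)}{\psi'(\xi_1)} = \frac{\phi'(\xi_1) - \phi'(0)}{\psi'(\xi_1) - \psi'(0)} = \frac{\phi''(\xi_2)}{\psi''(\xi_2)} = \frac{f''(x+\xi_2) + f''(x-\xi_2)}{2}.
\]
As $h \to 0$ we have $\xi_2 \to 0$, so the right-hand side tends to $f''(x)$, which is exactly the claimed limit. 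An alternative, perhaps cleaner, route is simply to invoke Taylor's theorem with Peano remainder at $x$: $f(x+h) = f(x) + f'(x)h + \tfrac{1}{2}f''(x)h^2 + o(h^2)$ and the analogous expansion for $f(x-h)$; adding these two and subtracting $2f(x)$ makes the first-order terms cancel and yields $\phi(h) = f''(x)h^2 + o(h^2)$, whence dividing by $h^2$ gives the result.

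The main subtlety — and the only place one must be careful — is that $f$ is assumed only twice \emph{differentiable}, not twice \emph{continuously} differentiable, so one cannot appeal to continuity of $f''$ at $x$ or use L'H\^opital's rule naively (which would require $f''$ to be differentiable, or at least the intermediate quotient to behave well). This is precisely why the Taylor expansion with Peano remainder (which needs only the existence of $f''(x)$, not continuity) or the two-step Cauchy mean value theorem argument above is the right tool: both use only that $f$ is differentiable on a neighborhood of $x$ and twice differentiable \emph{at} $x$. I would present the Taylor-with-Peano-remainder version as the main proof, since it is the shortest and makes the cancellation of the linear terms completely transparent, and note in passing that the symmetry of the quotient is exactly what kills those terms even when the one-sided expansions are only valid to order $o(h)$ individually were $f$ merely once differentiable — here, with $f''(x)$ existing, we get the full $o(h^2)$ control needed.
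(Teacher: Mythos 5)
The paper gives no proof of this proposition at all---it simply cites Rudin---so a self-contained argument is welcome. Your Taylor--Peano argument is correct and is the standard one: the second-order Taylor expansion with Peano remainder needs only that $f'$ exist in a neighbourhood of $x$ and that $f''(x)$ exist at the point itself, the linear terms cancel by the symmetry of the quotient, and division by $h^2$ gives the claim. Designating that as the main proof is the right call.

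However, the double Cauchy mean value theorem argument that you work out in detail has a genuine gap at its last step, and it is exactly the pitfall you warn against in your closing paragraph. After the second application you arrive at $\tfrac{1}{2}\bigl(f''(x+\xi_2)+f''(x-\xi_2)\bigr)$ and conclude that this tends to $f''(x)$ because $\xi_2\to 0$. That inference uses continuity of $f''$ at $x$, which is not assumed---and which fails in precisely the situations this paper cares about, since the constructions of Theorem~\ref{func-cons} and Section~\ref{sec-mult-dif} produce computable functions whose (second) derivatives are discontinuous at non-generic points. The repair is to stop after \emph{one} application of the Cauchy mean value theorem, which yields, for some $\xi$ strictly between $0$ and $h$,
\[
\frac{f(x+h)+f(x-h)-2f(x)}{h^2}=\frac{f'(x+\xi)-f'(x-\xi)}{2\xi}
=\frac{1}{2}\left(\frac{f'(x+\xi)-f'(x)}{\xi}+\frac{f'(x-\xi)-f'(x)}{-\xi}\right),
\]
and then let $h\to 0$ (so $\xi\to 0$), using only the \emph{definition} of $f''(x)$ as the derivative of $f'$ at $x$. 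So: keep the Taylor version as the proof, and either repair or drop the two-step mean value theorem version.
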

\begin{proof}
See e.g.\ Rudin \cite[p.\ 115]{Rudin}.
\end{proof}

\begin{thm}
Fix $n \geq 1$. Then a real $x \in [0,1]$ is $1$-generic if and only if every $n$ times differentiable, computable function $f: [0,1] \to \Real$ has continuous $n$th derivative at $x$.
\end{thm}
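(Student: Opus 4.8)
The plan is to prove both directions of the equivalence, using the $n=1$ case (Theorem~\ref{thm-main}) as the base and leveraging Propositions~\ref{c2-dif-comp} and~\ref{dif2-baire} to reduce everything to an effective Baire class~1 argument. For the forward direction, suppose $x$ is $1$-generic and let $f$ be $n$ times differentiable and computable. I would argue that $f^{(n)}$ is of effective Baire class~1, and then apply Theorem~\ref{baire-eff} to conclude that $f^{(n)}$ is continuous at $x$. To see that $f^{(n)}$ is effective Baire class~1, first note that $f^{(n-1)}$ is \emph{continuously} differentiable (since $f^{(n)}$ exists), so if $n \geq 2$ we have that $f^{(n-2)}$ is twice continuously differentiable and hence, by Proposition~\ref{c2-dif-comp} (applied with $f^{(n-2)}$ in place of $f$ — this requires first checking inductively that $f^{(n-2)}$ is computable), $f^{(n-1)}$ is computable. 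Then Proposition~\ref{dif2-baire}, applied to $g = f^{(n-2)}$, expresses $f^{(n)} = g''$ as a pointwise limit of the uniformly computable sequence $h \mapsto \frac{g(x+h)+g(x-h)-2g(x)}{h^2}$ along $h = 2^{-k}$, so $f^{(n)}$ is effective Baire class~1 by Proposition~\ref{prop:effectiveBaire1}. For $n = 1$ this is just Corollary~\ref{dif-baire}. The boundary issue near $0$ and $1$ is handled exactly as in Corollary~\ref{dif-baire} by reflecting $f$ past the endpoints.

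For the converse, suppose $x$ is not $1$-generic; I must produce an $n$ times differentiable computable $f$ whose $n$th derivative is discontinuous at $x$. Fix a $\Pi^0_1$ class $V$ with $x \in V \setminus \mathrm{Int}(V)$, and let $f_1$ be the function produced by Theorem~\ref{func-cons} for this $V$, so $f_1$ is differentiable, computable, and $f_1'$ is discontinuous at $x$. The idea is to integrate $n-1$ times: set $f(t) = \int_0^t \int_0^{t_{n-1}} \cdots \int_0^{t_2} f_1(t_1)\, dt_1 \cdots dt_{n-1}$, an iterated antiderivative. Then $f^{(n-1)} = f_1$ is differentiable with $f^{(n)} = f_1'$ discontinuous at $x$; and since $f_1$ is continuous (being differentiable), all the lower derivatives $f, f', \dots, f^{(n-1)}$ exist and are continuous, so $f$ is genuinely $n$ times differentiable. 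It remains to check that $f$ is computable: iterated Riemann integration of a computable function is a computable operation (this is standard in computable analysis — the integral $\int_0^t$ of a computable function of $(t,s)$ is computable in $t$), and the bound $|f_1| \leq 1$ from the construction in Theorem~\ref{func-cons} gives effective modulus control, so each successive antiderivative is again computable. Thus $f$ witnesses the failure.

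The main obstacle I anticipate is the bookkeeping in the forward direction: one needs a clean inductive statement, something like ``if $f$ is computable and $k+1$ times differentiable then $f^{(k)}$ is computable,'' whose proof chains Proposition~\ref{c2-dif-comp} through the derivatives. One has to be a little careful that Proposition~\ref{c2-dif-comp} requires \emph{twice continuously} differentiable, and that existence of $f^{(k+1)}$ only gives continuity of $f^{(k)}$, hence $C^1$ for $f^{(k-1)}$ and $C^2$ only for $f^{(k-2)}$ — so the induction delivers computability of $f^{(k-1)}$, not $f^{(k)}$, from $f^{(k+1)}$ existing. This is exactly enough to feed Proposition~\ref{dif2-baire} with $g = f^{(n-2)}$ computable and obtain $f^{(n)}$ as an effective Baire class~1 function. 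The rest (computability of iterated integrals, the endpoint reflection trick, and invoking Theorems~\ref{baire-eff} and~\ref{func-cons}) is routine given what has already been established.
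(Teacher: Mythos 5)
Your proposal is correct and follows essentially the same route as the paper: the converse direction by taking $n-1$ successive computable antiderivatives of the function from Theorem~\ref{func-cons}, and the forward direction by showing $f^{(n-2)}$ is computable via an induction through Proposition~\ref{c2-dif-comp} and then using the symmetric difference quotient of Proposition~\ref{dif2-baire} to exhibit $f^{(n)}$ as a function of effective Baire class~1, to which Theorem~\ref{baire-eff} applies. The only slip --- your initial claim that $f^{(n-1)}$ is continuously differentiable and hence $f^{(n-2)}$ is $C^2$, which fails because $f^{(n)}$ need not be continuous --- is one you already correct in your final paragraph, where the induction rightly delivers computability only of $f^{(n-2)}$, which is all that is needed.
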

\begin{proof}
For $n=1$ this is exactly Theorem \ref{thm-main}. So, we may assume $n \geq 2$. First, if $x \in [0,1]$ is not $1$-generic, then by Theorem \ref{thm-main} there is a differentiable, computable function $g: [0,1] \to \Real$ such that $g'$ is not continuous at $x$. Now let $h_1 = g$ and let $h_i$ be a computable antiderivative of $h_{i-1}$ for $2 \leq i \leq n$ 
(which exists by Ko~\cite[Theorem 5.29]{Ko}). 
Then, if we let $f = h_n$, we see that $f$ is an $n$ times differentiable, 
computable function such that $f^{(n)}$ is discontinuous at $x$.

Conversely, if $f$ is an $n$ times differentiable, computable function, then $f^{(n-2)}$ is computable by Proposition \ref{c2-dif-comp}. So, $f^{(n)}$ is of effective Baire class $1$ by Proposition \ref{dif2-baire}. Thus, if $f^{(n)}$ is discontinuous at $x$, then $x$ is not 1-generic by Theorem \ref{baire-eff}.
\end{proof}

\section{Complexity theoretic considerations}\label{sec-poly}

In this section we discuss polynomial time computable real functions. 
The theory of these functions is developed in Ko~\cite{Ko}, to which 
we refer the reader for the basic results and definitions. 
Briefly, a function $f: [0,1] \to \Real$ is polynomial time computable
if for any $x\in [0,1]$ we can compute an approximate value of $f(x)$
to within an error of $2^{-n}$ in time $n^k$ for some constant $k$. 

Most of the common functions from analysis, such as rational functions 
and the trigonometric functions, as well as their inverses, are all 
polynomial time computable, 
see e.g.\ Brent~\cite{Brent} and Weihrauch~\cite{Weihrauch}. 
Also, the polynomial time computable functions are closed under 
composition. 
With this knowledge, it is not difficult to see that the construction 
of the function $f$ in section~\ref{sec:func-cons} can be modified to yield 
a polynomial time computable function, rather than just a computable one. 
For this it is also needed that the complement of the 
$\Pi^0_1$ class $V$ from Theorem~\ref{func-cons} can be represented by a 
polynomial time computable set of strings. 
This is similar to the fact that every nonempty computably enumerable set 
is the range of a polynomial time computable function, simply by 
sufficiently slowing down the enumeration.
Since the enumeration of $U = [0,1] \setminus V$ in the proof of 
Theorem~\ref{func-cons} is now slower, the definition of $f_n$ in 
\eqref{eq:f} has to be adapted by replacing $2^n$ by $2^{t(n)}$, 
where $t(n)$ is the stage at which the interval $(q_n,r_n)$ is 
enumerated into~$U$. This modification ensures that the functions 
$f_n$ are uniformly polynomial time computable, so that also the function 
$f = \sum_{n=0}^\infty f_n$, is polynomial time computable. 
Thus we obtain the following strengthening of Theorem~\ref{func-cons}:

\begin{thm}\label{thm:modified}
Let $V$ be a $\Pi^0_1$ class.
Then there exists a differentiable polynomial time computable function
$f: [0,1] \to \Real$ such that $f'$ is discontinuous at every
$x \in V \setminus \mathrm{Int}(V)$.
\end{thm}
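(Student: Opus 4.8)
The plan is to upgrade the construction in the proof of Theorem~\ref{func-cons} so that every object appearing in it is polynomial time computable, following the outline sketched in the paragraph preceding the statement. The overall architecture of the proof stays exactly the same: we reuse the auxiliary functions $h$ and $g$, the enumeration of $U = [0,1]\setminus V$ as a disjoint union of rational intervals, the definition of $f$ as an infinite sum $\sum_n f_n$, and the three-part verification that $f$ is computable, differentiable, and has derivative discontinuous on $V\setminus\mathrm{Int}(V)$. What changes is only the bookkeeping needed to control running time. First I would observe that $h(x)=x^2\sin(1/x^2)$ and $g$ are polynomial time computable on $[0,1]$, since $\sin$ and rational functions are polynomial time computable and this class is closed under composition and the finitely many case distinctions defining $g$ (the splitting point $x_0$ can be chosen polynomial time computable, being an isolated root of a polynomial time computable function).

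Next I would address the representation of $V$. As noted in the excerpt, every nonempty c.e.\ set is the range of a polynomial time computable function by slowing down the enumeration; applying this to the c.e.\ set of (codes of) rational intervals whose union is $U$, we may assume the pairwise disjoint intervals $(q_n,r_n)$ are enumerated by a polynomial time computable procedure, at the cost that the interval $(q_n,r_n)$ only appears at some stage $t(n)\geq n$ which may be much larger than $n$. To compensate, I would replace the factor $2^n$ in the definition~\eqref{eq:f} of $f_n$ by $2^{t(n)}$, setting
\[
f_n(x) = \begin{cases}
0 & \text{if } x\in[0,q_n]\\
\displaystyle \frac{r_n-q_n}{2^{t(n)}}\, g\!\left(\frac{x-q_n}{r_n-q_n}\right) & \text{if } x\in[q_n,r_n]\\
0 & \text{if } x\in[r_n,1].
\end{cases}
\]
Since $t$ is nondecreasing and $t(n)\geq n$, each $f_n$ is still bounded by $2^{-t(n)}\leq 2^{-n}$, the supports are still disjoint, and all the inequalities in the original differentiability argument (which only used $|f_n(y)/(y-q_n)|\leq 2^{-n}$ up to the constant $1/(1-x_0)$) go through verbatim with $2^{-n}$ replaced by $2^{-t(n)}$, so $f$ is still differentiable with $f'=0$ off the open intervals. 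The discontinuity argument also survives: inside a fixed interval $(q_n,r_n)$ we still have $f'(y)=2^{-t(n)}g'((y-q_n)/(r_n-q_n))$, and choosing $k$ large enough that $2^{-k}\leq (s-q_n)/(r_n-q_n)$ and $k\geq t(n)$ yields a point where $g'$ is as negative as we like, in particular $g'(z)\leq -2^{t(n)}$, so $f'(y)\leq -1$.

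The main obstacle, and the only genuinely new content, is verifying that $f=\sum_n f_n$ is polynomial time computable. The key point is that to approximate $f(x)$ to within $2^{-m}$ it suffices to know which interval $(q_n,r_n)$ contains $x$ (if any) among those with $t(n)\leq m+1$, evaluate the single relevant $f_n$ there, and output $0$ otherwise, since the tail $\sum_{t(n)>m+1} f_n$ is bounded by $\sum_{j>m+1}2^{-j}=2^{-m-1}$ on disjoint supports. Because the enumeration of $U$ has been slowed down precisely so that running it for $m+1$ stages takes time polynomial in $m$, only polynomially many intervals $(q_n,r_n)$ with $t(n)\leq m+1$ need to be examined, each comparison $x\in(q_n,r_n)$ costs polynomial time, and evaluating the relevant $f_n$ to the required precision costs polynomial time since $g$ is polynomial time computable and the scaling factors $r_n-q_n$, $2^{-t(n)}$ are polynomial time computable from $n$ and $m$. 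Assembling these observations gives a polynomial time algorithm for $f$ in the sense of Ko~\cite{Ko}, completing the proof; I would keep this last verification brief, emphasizing the slowdown bookkeeping rather than reproving the estimates already established in Theorem~\ref{func-cons}.
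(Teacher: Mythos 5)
Your proposal is correct and follows essentially the same route as the paper, which itself only sketches this modification in the paragraph preceding the theorem: slow down the enumeration of $U$, replace $2^n$ by $2^{t(n)}$ in \eqref{eq:f}, note that the estimates of Theorem~\ref{func-cons} survive since $t(n)\geq n$, and verify polynomial time computability of the sum by truncating at the intervals enumerated within the first polynomially many stages. If anything, you supply more detail than the paper does, in particular on why only polynomially many intervals need to be examined to approximate $f(x)$ to within $2^{-m}$.
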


We now have the following variant of Theorem~\ref{thm-main}:

\begin{thm}
A real $x \in [0,1]$ is 1-generic if and only if for every
differentiable polynomial time computable function $f: [0,1] \to \Real$,
$f'$ is continuous at $x$.
\end{thm}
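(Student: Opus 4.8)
The plan is to mirror the two-step structure of the proof of Theorem~\ref{thm-main}, substituting Theorem~\ref{thm:modified} for Theorem~\ref{func-cons} where needed. For the forward implication I would simply note that every polynomial time computable function is in particular computable, so if $x$ is 1-generic then Theorem~\ref{thm-main1} already gives that $f'$ is continuous at $x$ for every differentiable polynomial time computable $f$; nothing new is required here.

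For the reverse implication I would argue by contraposition, exactly as in the proof of Theorem~\ref{thm-main2}. If $x$ is not 1-generic, then by Definition~\ref{defi-gen-unit} there is a $\Pi^0_1$ class $V \subseteq [0,1]$ with $x \in V \setminus \mathrm{Int}(V)$. Applying Theorem~\ref{thm:modified} to this $V$ yields a differentiable polynomial time computable function $f$ whose derivative is discontinuous at every point of $V \setminus \mathrm{Int}(V)$, hence in particular at $x$. This is precisely the differentiable polynomial time computable function with derivative discontinuous at $x$ that we need.

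The only genuine obstacle is the one already addressed in the discussion preceding Theorem~\ref{thm:modified}: verifying that the Volterra-style construction of Theorem~\ref{func-cons} survives the imposition of polynomial time bounds. The two points to check are that the complement $U = [0,1] \setminus V$ can be presented by a polynomial time computable set of strings (by slowing down the enumeration, just as one slows a c.e.\ enumeration to realize it as the range of a polynomial time function), and that the rescaled bump functions $f_n$ remain uniformly polynomial time computable once the factor $2^n$ in \eqref{eq:f} is replaced by $2^{t(n)}$, where $t(n)$ is the stage at which $(q_n, r_n)$ enters $U$. Since polynomials, $\sin$, $\cos$ and the other standard functions used in building $g$ are polynomial time computable and polynomial time computability is closed under composition, this goes through and $f = \sum_{n=0}^\infty f_n$ is polynomial time computable. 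Given Theorem~\ref{thm:modified}, the present theorem is then immediate.
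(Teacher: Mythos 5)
Your proposal is correct and follows essentially the same route as the paper: the forward direction is inherited from Theorem~\ref{thm-main} (via Theorem~\ref{thm-main1}) since polynomial time computable functions are computable, and the reverse direction applies Theorem~\ref{thm:modified} to a witnessing $\Pi^0_1$ class exactly as the paper does. Your recap of why the Volterra-style construction survives the polynomial time bounds matches the paper's own discussion preceding Theorem~\ref{thm:modified}, so nothing further is needed.
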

\begin{proof}
The ``only if'' direction is immediate from Theorem~\ref{thm-main}. 
For the ``if'' direction;
if $x$ is not 1-generic, then there is a $\Pi^0_1$ class $V$ such that 
$x\in V\setminus\mathrm{Int}(V)$. 
Theorem~\ref{thm:modified} then gives a differentiable polynomial time 
computable function $f$ for which $f'$ is discontinuous at~$x$.
\end{proof}

\providecommand{\bysame}{\leavevmode\hbox to3em{\hrulefill}\thinspace}
\providecommand{\MR}{\relax\ifhmode\unskip\space\fi MR }
\providecommand{\MRhref}[2]{%
  \href{http://www.ams.org/mathscinet-getitem?mr=#1}{#2}
}
\providecommand{\href}[2]{#2}


\begin{thebibliography}{1}

\bibitem{BrattkaMillerNies} 
V.~Brattka, J.~S.~Miller, and A.~Nies,
\emph{Randomness and differentiability}
arXiv:1104.4465 [math.LO], 2011.

\bibitem{Brent} 
R. P. Brent, \emph{Fast multiple-precision evaluation of elementary functions},
Journal of the ACM 23(2) (1976) 242--251.

\bibitem{bruckner-leonard-1966}
A.~M. Bruckner and J.~L. Leonard, \emph{Derivatives}, 
The American Mathematical Monthly 73(4) Part 2: Papers in Analysis (1966) 24--56.

\bibitem{Demuth} O. Demuth, \emph{The differentiability of constructive functions of 
weakly bounded variation on pseudo numbers}, 
Commentationes Mathematicae Universitatis Carolinae 16(3)(1975) 583--599. (In Russian.)

\bibitem{downey-hirschfeldt-2010}
R.~G. Downey and D.~R. Hirschfeldt, \emph{Algorithmic randomness and
complexity}, Springer, 2010.

\bibitem{Jockusch} C. G. Jockusch,
\emph{Degrees of generic sets}, 
in: Recursion Theory: its Generalisations and Applications, 
edited by F. R. Drake and S. S. Wainer, Cambridge University Press, 1980, 110--139.
 
\bibitem{kechris-1995}
A.~S. Kechris, \emph{Classical descriptive set theory}, Springer-Verlag, 1995.

\bibitem{Ko} 
K. Ko, \emph{Complexity theory of real functions}, 
Birkh\"auser, 1991. 

\bibitem{moschovakis-2009}
Y.~N. Moschovakis, \emph{Descriptive set theory}, second edition, 
Mathematical Surveys and Monographs, Vol.~155, American Mathematical Society, 2009.

\bibitem{OdifreddiI} 
P.~G.~Odifreddi, \emph{Classical recursion theory},
Vol.~1, Studies in Logic and the Foundations of
Mathematics Vol.~125, North-Holland, 1989.

\bibitem{OdifreddiII} 
P.~G.~Odifreddi, \emph{Classical recursion theory},
Vol.~2, Studies in Logic and the Foundations of
Mathematics Vol.~143, North-Holland, 1999.

\bibitem{oxtoby-1980}
J.~C.~Oxtoby, \emph{Measure and category}, second ed., 
Springer-Verlag, 1980.


\bibitem{Pour-ElRichards} M. B. Pour-El and J. I. Richards,
\emph{Computability in Analysis and Physics}, 
Perspectives in Mathematical Logic, Vol.~1, 
Springer-Verlag, 1989. 

\bibitem{Rudin} W. Rudin,
\emph{Principles of Mathematical Analysis}, third ed.,
McGraw-Hill Book Company, 1976.

\bibitem{Weihrauch} K. Weihrauch, 
\emph{Computable analysis, an introduction}, Springer-Verlag, 2000. 

\end{thebibliography}
\end{document}